 \newtheoremstyle{mythmsty}{15pt}{5pt}{\it}{0pt}{\textbf}{}{0pt}{\textbf{(\thmnumber{#2})} \thmname{#1} \thmnote{ #3}
}
 \theoremstyle{mythmsty}
 \newtheorem{thm}[paragraph]{\bf Theorem}
 \newtheorem{cor}[paragraph]{\bf Corollary}
 \newtheorem{prop}[paragraph]{\bf Proposition}
 \newtheorem{lemma}[paragraph]{\bf Lemma}
 \newenvironment{thm*}{\bf Theorem \it}{}
 \newenvironment{cor*}{\bf Corollary \it}{}
 \newenvironment{definition*}{\bf Definition \it}{}
 \newenvironment{prop*}{\bf Proposition \it}{}
 \newenvironment{lemma*}{\bf Lemma \it}{}
 \newenvironment{fact*}{\bf Fact \it}{}
 \newenvironment{rmk*}{\bf Remark \it}{}
 \newenvironment{exercise*}{\bf Exercise \it}{}
 \newenvironment{solution*}{\underline{Solution}\quad\rm}
  \numberwithin{equation}{subsubsection}
\newcommand{\hhh}[1]{\vskip .5in ******************* HERE! ********************\\ \vskip .1in $\Rightarrow$#1$\Leftarrow$}
\newcommand{\mysec}[1]{\noindent \section{#1} \label{#1}}
\newcommand{\mysubsec}[1]{\noindent \subsection{#1} \label{#1}}
\newcommand{\ppp}[1]{\noindent\subsubsection{}\label{#1}\hskip -.1455in\textbf{)}}
\newcommand{\rf}[1]{(\ref{#1})}
\title{\bf  Functors on triangulated tensor categories}
\author{{\small \textsc{ Yu-Han Liu}}}
\begin{document}
\date{}
\maketitle

\setcounter{tocdepth}{2}

\setcounter{section}{0}

\mysec{Introduction}

\mysubsec{Introduction}

\ppp{}  In this paper we define the functorial spectrum $|\mathrm{Sp}(T)|$ for every triangulated tensor category; this means that $T$ is a triangulated category on which we have a monoidal category structure, or a ``tensor product'', $(a,b)\mapsto a\otimes b$ which is exact in both variables.  The central idea of replacing the category $T$ with the functor $\mathrm{Sp}(T)$ it represents had already appeared in \cite{liu_sierra_quiver}.  

Roughly speaking, elements in $|\mathrm{Sp}(T)|$ are (isomorphism classes of) functors from $T$ into $D^b(\mathrm{Vect}_k)$ which preserve the unit object and the tensor product.  This construction may be viewed as analogous to the development of algebraic geometry from a functorial point of view, as done in \cite{demazure_gabriel_intro}.

In general $|\mathrm{Sp}(T)|$ comes with some natural structures:  It is always a locally ringed space \rf{Structure sheaf}, and natural transformations between functors gives a ``path algebra'' structure \rf{Path algebra}.  The construction $T\mapsto |\mathrm{Sp}(T)|$ is contravariant in $T$.

\ppp{}  Our construction is closely related to the prime spectrum $\mathrm{Spec}(T)$ defined by Balmer \cite{balmer_spectrumtt}.  Indeed there is always a comparison morphism \[|\mathrm{Sp}(T)|\longrightarrow \mathrm{Spec}(T).\]  This morphism is not always an isomorphism \rf{par:ex}.

Balmer \cite{balmer_spectrumtt} showed that in the case when $X$ is a topologically noetherian scheme and $T_X:=D(X)_\mathrm{parf}$ the triangulated tensor category of perfect complexes \cite[3.1]{thomason_class} on $X$ (along with the derived tensor product), we have a natural comparison isomorphism \[X\longrightarrow \mathrm{Spec}(T_X).\]  

We will show in \rf{thm:main} that under the same assumptions we have comparison isomorphisms

\[X\longrightarrow |\mathrm{Sp}(T_X)|\longrightarrow \mathrm{Spec}(T_X).\]

\ppp{}  On the other hand, the path algebra structure allows us to recover finite ordered quivers from their representations \cite{liu_sierra_quiver}; see \rf{thm:quiver} below.

\ppp{}  From the point of view of the reconstruction result \rf{thm:main}, we see that non-isomorphic schemes $X,X'$ with equivalent categories $T_X\cong T_{X'}$ give distinct tensor products on the same triangulated category.  Conversely, starting with a scheme $X$, in order to understand other schemes $X'$ having equivalent category of perfect complexes, we need to understand possible tensor products on the triangulated category $T_X$.

Following this philosophy we give an alternative proof of the reconstruction theorem \rf{thm:BO} due to Bondal-Orlov \cite{BO_reconstruction}.  The proof in \cite{BO_reconstruction} may be interpreted as saying that a smooth projective variety $X$ with ample (anti-)canonical bundle is the moduli space of ``point objects'' in $T_X=D^b(X)$.  This idea was later used to construct flops by Bridgelend in \cite{bridgeland_flops}, where the meaning of ``point objects'' depends on the choice of a $t$-structure.

From our point of view, however, the space of $X$ is constructed as the functorial spectrum $|\mathrm{Sp}(T_X)|$ first.  The reconstruction theorem by Bondal-Orlov is then reinterpreted as the uniqueness of tensor product on the triangulated category $T_X$.  

\ppp{}  When we work in the derived category $D(\mathcal O_X\mathrm{-mod})$ all functors considered are derived functors.  For example, if $f:Y\rightarrow X$ is a morphism between schemes we denote simply by $f^*$ and $f_*$ the derived functors $Lf^*$ and $Rf_*$, respectively. 

\mysec{Representable functors on triangulated tensor categories}

\mysubsec{The set of points}

\ppp{par:TT}  Let $\mathbf{TT}$ be the category of essentially small triangulated categories which is also a \emph{strict} monoidal category \cite[Chapter VII, section 1.]{maclane_categories}, satisfying the following compatibility condition with the triangulated category structure:  The tensor product is required to be exact in each variables.

 Morphisms in $\mathbf{TT}$ are exact functors which are also \emph{strong} monoidal functors, that is, the unit objects and tensor products are preserved up to natural \emph{isomorphisms} \cite[Chapter VII, section 1.]{maclane_categories}.  Morphisms in $\mathbf{TT}$ will be simply called \emph{tensor functors}. 
 
If $g: T\rightarrow T'$ is a tensor functor which is also an equivalence of categories, then by \cite[Chapter VI, section 4]{maclane_categories} and \cite[Lemma 1.2]{orlov_k3} we know that its quasi-inverse $g': T'\rightarrow T$ is exact, and can be shown to be also a strong monoidal functor.

For any $T\in\mathbf{TT}$ denote by $\mathrm{Sp}(T)$ the set-valued covariant functor $\mathrm{Hom}_{\mathbf{TT}}(T,-)/\cong$ on $\mathbf{TT}$, sending $S\in\mathbf{TT}$ to the natural isomorphism classes of tensor functors from $T$ to $S$.  We will typically denote elements in $\mathrm{Sp}(T)(S)$ simply by representative functors $F:T\rightarrow S$.

\ppp{par:field}  For any field $k$ denote by $\mathrm{Vect}_k$ the abelian category of finite dimensional $k$-vector spaces, and by $T_k$ the derived category $D^b(\mathrm{Vect}_k)$ with the usual tensor product and unit object $k$.  We denote $\mathrm{Sp}(T)(T_k)$ simply by $\mathrm{Sp}(T)(k)$.  

The category $T_k$ is equivalent to the subcategory $\displaystyle\bigoplus_j \mathrm{Vect}_k[j]$ by taking cohomology $V\mapsto H^*(V)$.  Note that taking cohomology is a tensor functor; we will often identify $T_k$ with $\displaystyle\bigoplus_j \mathrm{Vect}_k[j]$.


\ppp{par:def}  For any field extension $k\rightarrow k'$ we have a tensor functor $-\otimes_kk': T_k\rightarrow T_{k'}$ and hence a map $\mathrm{Sp}(T)(k)\rightarrow \mathrm{Sp}(T)(k')$.  Let \[|\mathrm{Sp}(T)|:=\varinjlim_{k} \mathrm{Sp}(T)(k).\]  More precisely, $|\mathrm{Sp}(T)|$ is the disjoint union of $\mathrm{Sp}(T)(k)$ modulo the equivalence relation generated by $F\sim F'$ if $F\in \mathrm{Sp}(T)(k)$ is mapped to $F'\in \mathrm{Sp}(T)(k')$ for some field extension $k\rightarrow k'$.  For example, $|\mathrm{Sp}(T_k)|$ consists of one point for every field $k$.  We denote by $[F]$ the element in $|\mathrm{Sp}(T)|$ represented by a functor $F: T\rightarrow T_k$.


In the following sections we introduce some natural structures on the set $|\mathrm{Sp}(T)|$.

\ppp{par:subfunctors}  Notice that the construction above makes sense for any set-valued covariant functor $\mathscr X$ on the category $\mathbf{TT}$, giving a set $|\mathscr X|$.  This way we get a set-valued covariant functor on the category $\mathrm{Fun}(\mathbf{TT},\mathbf{Set})$ of functors on $\mathbf{TT}$.  

We say a subfunctor $\mathscr U\subset\mathscr X$ is \emph{open} if for every field extension $k\rightarrow k'$ inducing $f:\mathscr X(k)\rightarrow \mathscr X(k')$ we have $f^{-1}\mathscr U(k')=\mathscr U(k)$; note that the containment $\supset$ holds for any subfunctor of $\mathscr X$.  

This property allows us to ``fold'' the equivalence relation:  If $\mathscr U\subset \mathscr X$ is open then a point $[u]\in|\mathscr X|$ lies in the image of the natural map $|\mathscr U|\rightarrow |\mathscr X|$ if and only if $u\in \mathscr U(k)$ for \emph{every} representative $u$ of $[u]$.

In particular in this case $|\mathscr U|$ can be naturally identified with a subset of $|\mathscr X|$.  It is then straightforward to show that \[\left|\bigcup_i\mathscr U_i\right|=\bigcup_i |\mathscr U_i|,\]and \[|\mathscr U_1\cap \mathscr U_2|=|\mathscr U_1|\cap|\mathscr U_2|,\]as subsets of $|\mathscr X|$ for any collection $\mathscr U_i$ of \emph{open} subfunctors of $\mathscr X$.

\mysubsec{Topology}

\ppp{}  For any collection $M$ of objects in $T$, denote by $\mathscr U_M$ the subfunctor of $\mathrm{Sp}(T)$ defined by \[\mathscr U_M(S):=\{F:T\rightarrow S\,|\,0\in F(M^\otimes)\},\]where $M^\otimes$ denotes the the collection of finite tensor products of objects in $M$.  Notice that we have \[\mathscr U_M=\bigcup_{a\in M}\mathscr U_a,\]and \[\mathscr U_{M_1}\cap \mathscr U_{M_2}=\mathscr U_{M_1\oplus M_2},\]where $M_1\oplus M_2$ denotes the collections of objects $a_1\oplus a_2$ with $a_i\in M_i$.

\ppp{}  Since the functor $-\otimes_kk':T_k\rightarrow T_{k'}$ for every field extension $k\rightarrow k'$ is injective on objects, the subfunctor $\mathscr U_M$ is open the sense of \rf{par:subfunctors}.

A subset of $|\mathrm{Sp}(T)|$ is called \emph{open} if it is of the form $U_M:=|\mathscr U_M|$ for some $M\subset T$; by \rf{par:subfunctors} we see that this indeed defines a topology on $|\mathrm{Sp}(T)|$.  

Notice that if $M$ is a finite set, let $a$ be the tensor product of objects in $M$, then we have \[\mathscr U_M=\mathscr U_a.\]  Hence every quasi-compact open subset of $|\mathrm{Sp}(T)|$ is of the form $U_a=|\mathscr U_a|$ for some object $a\in T$.

\ppp{par:supportdata}  As an example, for $a\in T$ the open set $U_a$ consists of classes $[F]\in|\mathrm{Sp}(T)|$ of functors $F: T\rightarrow T_k$ with $F(a)=0$.  Define the \emph{support} $\mathrm{s}(a)$ of $a$ to be the complement of $U_a$, then the association \[a\mapsto \mathrm{s}(a)\] gives a \emph{support data} $(|\mathrm{Sp}(T)|,\mathrm{s}(-))$ on $T$ in the sense of \cite[Definition 3.1]{balmer_presheaves}.

\mysubsec{Structure sheaf}

\ppp{par:R}  For any open set $U\subset |\mathrm{Sp}(T)|$ let \[T^U=\bigcap_{[F]\in U}\ker(F);\]note that this is well-defined since for any field extension $k\rightarrow k'$ we have that $\ker(-\otimes_kk')$ consists of objects isomorphic to zero, hence the full triangulated subcategory $\ker(F)$ is independent of the choice of representative in the class $[F]\in U$.  Notice that $T^U$ is a thick tensor ideal, and the localization $T/T^U$ is again a triangulated tensor category.  

More explicitly, with the notation in \rf{par:supportdata} we see that $T^U$ consists of the objects $a\in T$ such that $U$ is contained in $U_a$; or equivalently, the support $\mathrm{s}(a)$ is contained in the closed subset $|\mathrm{Sp}(T)|-U$. 

Define \[R(U):=\mathrm{End}_{T/T^U}(1),\]where $1$ denotes the image of the unit object in $T/T^U$.

Notice that the construction $U\mapsto T^U$ reverses inclusion, hence $U\mapsto R(U)$ is a presheaf of commutative rings \cite[Lemma 9.6]{balmer_spectrumtt}.  Denote by $\mathcal O_T$ its sheafification, then the pair $(|\mathrm{Sp}(T)|,\mathcal O_T)$ is a ringed space.

\ppp{}  We summarize some simple observations to be used freely below:

\begin{lemma}\label{lem:basic}  Let $M,M'$ be collections of objects in $T$, $a,a'$ objects in $T$, $U,U'$ open subsets of $|\mathrm{Sp}(T)|$, and $[F]\in |\mathrm{Sp}(T)|$.  The following statements hold:
\begin{enumerate}[(1)]
\item $M\subset M'\Rightarrow U_M\subset U_{M'}$.
\item $U\subset U'\Rightarrow T^U\supset T^{U'}$.
\item $[F]\in U_M \Leftrightarrow \ker(F)\cap M\neq\emptyset$.
\item $[F]\in U \Rightarrow \ker(F)\supset T^U$.
\item $[F]\in U_a\Leftrightarrow a\in \ker(F)\Rightarrow \ker(F)\supset T^{U_a}$.
\item $a\in T^{U_a}\subset T^{U_{a\oplus a'}}$.
\item $U_{a\oplus a'}\subset U_a$.

\end{enumerate}
\end{lemma}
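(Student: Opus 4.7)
Proof plan.  All seven statements are essentially unpacking of the definitions in \rf{par:R} and the preceding subsections, so the plan is to check them in the order listed, reusing earlier items to shorten the later ones.

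First I would handle the purely set-theoretic items (1), (2), (7).  For (1), $M\subset M'$ gives $M^\otimes\subset M'^{\otimes}$, so for any $S$ the condition $0\in F(M^\otimes)$ forces $0\in F(M'^{\otimes})$; hence $\mathscr U_M\subset \mathscr U_{M'}$, and openness of these subfunctors in the sense of \rf{par:subfunctors} lets us pass to $|\cdot|$.  For (2), $T^U$ is defined as an intersection $\bigcap_{[F]\in U}\ker(F)$ indexed by $U$, and enlarging the index set can only shrink the intersection.  For (7), if $[F]\in U_{a\oplus a'}$ then $F(a\oplus a')\cong F(a)\oplus F(a')\cong 0$ in $T_k$, which forces $F(a)\cong 0$ since $T_k\simeq\bigoplus_j\mathrm{Vect}_k[j]$ has no nontrivial direct-sum decompositions of the zero object; hence $[F]\in U_a$.

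Next I would do (3).  The identity $\mathscr U_M=\bigcup_{a\in M}\mathscr U_a$ recorded right after the definition of $\mathscr U_M$ gives, upon evaluation at $T_k$, that $[F]\in U_M$ iff $F(a)\cong 0$ for some $a\in M$, i.e.\ iff $\ker(F)\cap M\neq\emptyset$.  (This uses that in $T_k$ a tensor product of objects is zero iff one of the factors is zero, which is just the statement that $\mathrm{Vect}_k$ has no zero-divisors under $\otimes_k$.)

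Item (4) is immediate: if $[F]\in U$ then $F$ appears as one of the functors in the intersection defining $T^U$, so $\ker(F)\supset T^U$.  Item (5) combines (3) (applied to $M=\{a\}$) with (4): the first equivalence is the $M=\{a\}$ case of (3), and then the implication $\ker(F)\supset T^{U_a}$ is (4) applied to $U=U_a$.  Finally for (6), the inclusion $a\in T^{U_a}$ unwinds to: every $[F]\in U_a$ satisfies $a\in \ker(F)$, which is the first equivalence of (5); and the second inclusion $T^{U_a}\subset T^{U_{a\oplus a'}}$ is (2) applied to the containment $U_{a\oplus a'}\subset U_a$ just proved in (7).

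There is no serious obstacle here; the only mild subtlety is item (3), where one must remember that ``$0\in F(M^\otimes)$'' collapses to ``$0\in F(M)$'' because the target $T_k$ has no zero-divisors under tensor product.  Every other step is a formal inclusion chase among intersections, unions, and preimages of open subfunctors.
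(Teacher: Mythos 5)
Your proof is correct, and it follows the only natural route: the paper itself states Lemma \rf{lem:basic} without proof, as ``simple observations'' that unwind the definitions of $\mathscr U_M$, $U_M=|\mathscr U_M|$ and $T^U=\bigcap_{[F]\in U}\ker(F)$, which is exactly the check you carry out. Your one substantive observation --- that ``$0\in F(M^\otimes)$'' reduces to ``$F(a)=0$ for some $a\in M$'' because $T_k\simeq\bigoplus_j\mathrm{Vect}_k[j]$ has no tensor zero-divisors --- is precisely the point the paper relies on implicitly in \rf{par:supportdata}, so nothing is missing.
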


\ppp{par:stalk}  We now compute the stalk $\mathcal O_{T,[F]}$ at a point $[F]\in|\mathrm{Sp}(T)|$.  By definition we have \[\mathcal O_{T,[F]}=\varinjlim_{[F]\in U}\mathrm{End}_{T/T^U}(1).\]

Recall that every open set $U$ is of the form $\displaystyle U_M=\bigcup_{a\in M}U_a$ for some $M\subset T$.  Hence if $[F]\in U$ then $[F]\in U_a$ for some $a$, which is equivalent to the condition that $a\in \ker(F)$.  In other words, we have \[\mathcal O_{T,[F]}=\varinjlim_{a\in \ker(F)}\mathrm{End}_{T/T^{U_a}}(1).\]

Notice that for every $a\in \ker(F)$ we have a factorization 

\centerline{
\xymatrix{T\ar[rrr]^-F\ar[dr] &&& T_k \\ & T/T^{U_a} \ar[r] & T/\ker(F). \ar[ur]_-{\bar F}}
}
The homomorphisms $\mathrm{End}_{T/T^{U_a}}(1)\rightarrow \mathrm{End}_{T/\ker(F)}(1)$ then induces a homomorphisms \[\phi: \mathcal O_{T,[F]}\longrightarrow \mathrm{End}_{T/\ker(F)}(1).\]  

\begin{prop}\label{prop:stalk}  The homomorphism $\phi$ above is an isomorphism.  Moreover, $\mathrm{End}_{T/\ker(F)}(1)$ is a local ring; in particular $(|\mathrm{Sp}(T)|,\mathcal O_T)$ is a locally ringed space.
\end{prop}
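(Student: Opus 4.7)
The plan is to prove $\phi$ is bijective by direct calculation with the Verdier calculus of fractions, and then derive locality from the induced exact tensor functor $\bar F: T/\ker(F) \to T_k$. The key input throughout will be Lemma~\ref{lem:basic}(6), which states $a \in T^{U_a}$: this links the index set $\ker(F)$ of the colimit to the particular quotients $T/T^{U_a}$ defining the stalk.

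For surjectivity, I will take a Verdier representative $1 \xleftarrow{s} x \xrightarrow{f} 1$ in $T$ of any $\alpha \in \mathrm{End}_{T/\ker(F)}(1)$, where $a := \mathrm{cone}(s) \in \ker(F)$; item (6) of the lemma then makes $s$ invertible in $T/T^{U_a}$, so the same roof defines a preimage of $\alpha$ in $\mathrm{End}_{T/T^{U_a}}(1)$. For injectivity, I will take $\beta \in \mathrm{End}_{T/T^{U_a}}(1)$ represented by a roof $1 \xleftarrow{s} x \xrightarrow{f} 1$ with $\mathrm{cone}(s) \in T^{U_a}$, and assume its image in $T/\ker(F)$ vanishes. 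Standard Verdier calculus then produces $t: y \to x$ in $T$ with $c := \mathrm{cone}(t) \in \ker(F)$ and $f \circ t = 0$ in $T$. Setting $b := a \oplus c$, Lemma~\ref{lem:basic}(6) places both $\mathrm{cone}(s)$ and $c$ inside $T^{U_b}$, so $s$ and $t$ become isomorphisms in $T/T^{U_b}$; hence $f = (f \circ t) \circ t^{-1} = 0$ there, and $\beta$ already vanishes in $\mathrm{End}_{T/T^{U_b}}(1)$, and a fortiori in the colimit.

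For the local ring claim, the induced exact tensor functor $\bar F: T/\ker(F) \to T_k$ yields a ring homomorphism $\bar F_*: \mathrm{End}_{T/\ker(F)}(1) \to \mathrm{End}_{T_k}(1) = k$. When $\bar F_*(\alpha) \neq 0$ it is an invertible scalar, so $\bar F(\mathrm{cone}(\alpha)) = \mathrm{cone}(\bar F_*(\alpha)) = 0$; because $\bar F$ detects the zero object by construction of $T/\ker(F)$, this forces $\mathrm{cone}(\alpha) = 0$ in $T/\ker(F)$, so $\alpha$ is an isomorphism. The non-units therefore form the ideal $\ker(\bar F_*)$, making the ring local with residue field $k$. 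The main obstacle is the surjectivity step: one has to recognize that the natural index for a given Verdier roof in $T/\ker(F)$, namely the cone $a$ of its left leg, is itself a valid index for the colimit, which is precisely the content of Lemma~\ref{lem:basic}(6); once this is spotted the rest of the argument is bookkeeping with the calculus of fractions and exactness of $\bar F$.
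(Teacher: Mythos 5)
Your proof is correct, and it splits into a part that matches the paper and a part that takes a genuinely different route. The surjectivity step is exactly the paper's argument: take a roof $1\xleftarrow{s}x\xrightarrow{f}1$ with $a:=\mathrm{cone}(s)\in\ker(F)$ and use Lemma \rf{lem:basic}(6) to see the same roof already lives over $T/T^{U_a}$; the locality argument is also the paper's ($f\notin\ker(\bar F_1)$ forces $\bar F(f)$ to be an isomorphism, hence $\mathrm{cone}(f)\in\ker(F)$ and $f$ is invertible in $T/\ker(F)$ -- your "residue field $k$" should really be the image of $\bar F_1$, a subfield of $k$, but that is immaterial for locality). Where you diverge is injectivity: you invoke the standard Verdier criterion that a morphism dies in $T/\ker(F)$ if and only if it is killed by precomposition with some $t$ whose cone lies in $\ker(F)$, and then push that single witness into the finite stage $T/T^{U_{a\oplus c}}$ using items (6), (7) and (2) of the lemma. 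The paper instead re-derives this vanishing statement inside the directed system: it forms $\mathrm{cone}(g)$, notes the resulting splitting gives a left inverse of $h$ in $T/\ker(F)$, lifts that left inverse using the already-proved surjectivity (for a different Hom-group, which is why the paper proves the isomorphism for all $\mathrm{Hom}_{T/T^{U_a}}(x,y)$ rather than just $\mathrm{End}(1)$), and then juggles the three indices $a\oplus b\oplus c$. Your route is shorter and only needs the $\mathrm{End}(1)$ case, at the cost of citing the standard fraction-vanishing lemma; the paper's route is self-contained but longer and forces the more general Hom-level statement. Both are valid, and your bookkeeping with the indices ($b=a\oplus c\in\ker(F)$, $U_b\subset U_a$, $\mathrm{cone}(s),\mathrm{cone}(t)\in T^{U_b}$) is exactly what is needed for the filtered colimit.
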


\begin{proof}  We will show more generally that the natural map \[\phi: \varinjlim_{a\in\ker(F)}\mathrm{Hom}_{T/T^{U_a}}(x,y)\longrightarrow \mathrm{Hom}_{T/\ker(F)}(x,y)\] is an isomorphism for any $x,y\in T$, viewed as objects in the localizations.  

We first show that it is surjective:  Any element $f\in \mathrm{Hom}_{T/\ker(F)}(x,y)$ is of the form \[x\stackrel{s}{\longleftarrow} w \stackrel{g}{\longrightarrow}y,\] where $s$ and $g$ are morphisms in $T$ so that $F(s)$ is an isomorphism.  This means that $F(\mathrm{cone}(s))=0$; let $a:=\mathrm{cone}(s)\in\ker(F)$, then we see that $f$ is a morphism in $T/T^{U_a}$ since $a\in T^{U_a}$, hence the element $[f]$ in $\displaystyle\varinjlim_{a\in\ker(F)}\mathrm{Hom}_{T/T^{U_a}}(x,y)$ represented by $f\in\mathrm{Hom}_{T/T^{U_a}}(x,y)$ maps to $f$ under $\phi$.
  
We now show that $\phi$ is injective:  Let $[f]$ be in the kernel of $\phi$.  By definition $[f]$ is represented by some $f\in \mathrm{Hom}_{T/T^{U_a}}(x,y)$, for some $a\in \ker(F)$.  Hence $f$ is of the form $x\stackrel{s}{\leftarrow}w\stackrel{g}{\rightarrow}y$ with $s,g$ in $T$, and $s$ is an isomorphism in $T/T^{U_a}$.  Since $s$ is also an isomorphism in $T/\ker(F)$ we see that $g$ is mapped to zero in $T/\ker(F)$.

Consider distinguished triangle in $T$: \[w\stackrel{g}{\longrightarrow} y\stackrel{h}{\longrightarrow} \mathrm{cone}(g).\]  We see that $h$ has a left inverse in $T/\ker(F)$.  By the surjectivity above, we can find a morphism $m\in \mathrm{Hom}_{T/T^{U_b}}(\mathrm{cone}(g),y)$ that maps to this left inverse in $T/\ker(F)$.  Since $T^{U_a}\subset T^{U_{a\oplus b}}$, both $f$ and $m$ may be viewed as a morphism in $T/T^{U_{a\oplus b}}$.  

Now the composition $m\circ h\circ g$ is equal to zero in $T/T^{U_b}$, but $m\circ h$ is an isomorphism in $T/\ker(F)$.  This means that $c:=\mathrm{cone}(m\circ h)$ lies in $\ker(F)$, and moreover $m\circ h$ is already an isomorphism in $T/T^{U_c}$.  Let $d=a\oplus b\oplus c$, then in $T/T^{U_d}$ we have $m\circ h\circ g=0$ but $m\circ h$ is an isomorphism, hence $g$ is zero in $T/T^{U_d}$, that is, $[f]=0$ as required for the injectivity of $\phi$.

Finally we show that $\mathrm{End}_{T/\ker(F)}(1)$ is a local ring.  Denote by $\bar F_1$ the homomorphism \[\mathrm{End}_{T/\ker(F)}(1)\longrightarrow \mathrm{End}_{T_k}(F(1))\cong k;\]recall that $F(1)\cong k$ placed at degree zero in $T_k=D^b(\mathrm{Vect}_k)$.  It suffices to show that every $f\in \mathrm{End}_{T/\ker(F)}(1)$ not lying in $\ker(\bar F_1)$ is invertible, but this is just the definition of localization $T/\ker(F)$:  Since $k$ is a \emph{field}, $f\notin \ker(\bar F_1)$ implies that $\bar F_1(f)$ is invertible in $T_k$, in other words an isomophism.  Hence $f$ is already an isomorphism in $T/\ker(F)$.  \end{proof}



\mysubsec{Path algebra}

\ppp{}  For any field $k$ and $F,G\in \mathrm{Sp}(T)(k)$ define $\mathrm{Hom}(F,G)$ to be the $k$-vector space spanned by natural transformations from $F$ to $G$.  Let $A(T,k)$ be the direct sum of all $\mathrm{Hom}(F,G)$ with $F,G\in \mathrm{Sp}(T)(k)$, then by composing natural transformations and $k$-linearity $A(T,k)$ is a $k$-algebra.

\ppp{par:path}  With $T$ fixed the association $k\mapsto A(T,k)$ is functorial in the sense that for every field extension $k\rightarrow k'$ we have an algebra homomorphism \[\psi_{k\rightarrow k'}: A(T,k)\otimes_kk'\longrightarrow A(T,k').\]

This way $A(T,-)$ is an algebra-valued functor on the category of fields; we call this functor the \emph{path algebra on $|\mathrm{Sp}(T)|$}.  Recall that we have maps $\mathrm{Sp}(T)(k)\rightarrow |\mathrm{Sp}(T)|$ for every $k$ sending $F\mapsto [F]$.  The algebra $A(T,k)$ may be considered as giving a quiver with $\mathrm{Sp}(T)(k)$ as the set of vertices.

\mysubsec{Functoriality}

\ppp{}  Any morphism $g:T\rightarrow T'$ in $\mathbf{TT}$ induces by composition a natural transformation $\mathrm{Sp}(T')\rightarrow \mathrm{Sp}(T)$ between set-valued functors.  This in turn induced a map \[\gamma:|\mathrm{Sp}(T')|\longrightarrow |\mathrm{Sp}(T)|.\]

In the following we show that this induced map preserves the various structures introduced above.

\ppp{}  Let $M\subset T$ be a collection of objects, and $U_M=|\mathscr U_M|$ the associated open subset of $|\mathrm{Sp}(T)|$.  Let $M'=g(M)$, then it is straightforward to check that that $U_{M'}=\gamma^{-1}(U_M)$, hence $\gamma$ is continuous.


\ppp{}  Let $U\subset |\mathrm{Sp}(T)|$ be an open subset, and $U'=\gamma^{-1}(U)$.  Recall that an object $a\in T$ lies in $T^U$ if and only if $U\subset U_a$.  This last containment implies $U'\subset \gamma^{-1}(U_a)=U_{g(a)}$, hence $g$ gives a functor from $T^U$ into $T'^{U'}$.  Hence we have an induced functor \[\bar g_U:T/T^U\longrightarrow T'/T'^{U'}.\]

Recall the presheaf $R:U\mapsto \mathrm{End}_{T/T^U}(1)$ on $|\mathrm{Sp}(T)|$ whose sheafification was defined to be $\mathcal O_T$; similarly we have a presheaf $R'$ on $|\mathrm{Sp}(T')|$.  Then $U\mapsto \bar g_U$ gives a presheaf morphism \[R\longrightarrow \gamma_*R',\]which in turns gives \[\gamma^\#:\mathcal O_T\longrightarrow \gamma_*\mathcal O_{T'},\]making $(\gamma,\gamma^\#)$ a morphism between ringed spaces.

\ppp{}  Given $[F']\in |\mathrm{Sp}(T')|$ we have $\gamma([F'])=[F'\circ g]=:[F]\in |\mathrm{Sp}(T)|$.  The induced map \[\gamma^\#_{[F']}: \mathcal O_{T,[F]}\longrightarrow \mathcal O_{T',[F']}\] on stalks is \[\mathrm{End}_{T/\ker(F)}(1)\longrightarrow \mathrm{End}_{T'/\ker(F')}(1),\]which is induced by the restriction of $g$ from $\ker(F)=\ker(F'\circ g)$ to $\ker(F')$.  We know that these are local rings with maximal ideals respectively $\ker(\bar F(1))$ and $\ker(\bar F'(1))$.  Clearly $\gamma^\#_{[F']}$ maps the first ideal into the second, hence $(\gamma,\gamma^\#)$ is a morphism of locally ringed spaces.

\ppp{}  If $\phi: F\rightarrow G$ is a natural transformation with $F,G\in \mathrm{Sp}(T')(k)$, then for any $a\in T$, \[a\mapsto\phi_{g(a)}: F(g(a))\longrightarrow G(g(a))\] is a natural transformation from $F\circ g$ to $G\circ g$.  This extends to a natural transformation \[A(g): A(T',-)\longrightarrow A(T,-)\] between path algebras.

\mysubsec{Examples}

\ppp{par:eqisom}  If $g: T\rightarrow T'$ is tensor functor which is also an equivalence of categories, then the induced morphism $\gamma$ is an isomorphism; see \rf{par:TT}.

\ppp{}  Consider the localization functor $T\rightarrow T/T^{U_a}$ with $a\in T$.  Recall that $a\in T^{U_a}$ by \rf{lem:basic}, hence we have a natural injection \[\mathrm{Sp}(T/T^{U_a})(k) \stackrel{}{\longrightarrow} \mathscr U_a(k)\]for every field $k$: the set on the left side consists of functors $F:T\rightarrow T_k$ sending $T^{U_a}$ to $0$, while the set on the right side consists of $F$ sending $a$ to zero.  But if $F(a)=0$ then $[F]\in U_a$, hence $T^{U_a}\subset \ker(F)$, and $F$ lies in the left side.  Therefore this map is in fact a bijection.

Taking colimits we then obtain:

\begin{cor}  For any $a\in T$, we have an isomorphism between locally ringed spaces \[|\mathrm{Sp}(T/T^{U_a})|\stackrel{\cong}{\longrightarrow} U_a \]induced by the localization functor $T\rightarrow T/T^{U_a}$, where $U_a=|\mathscr U_a|$ is a locally ringed subspace of $|\mathrm{Sp}(T)|$.
\end{cor}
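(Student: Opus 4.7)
The plan is to upgrade the set-theoretic bijection obtained from $\mathrm{Sp}(T/T^{U_a})(k)\to \mathscr U_a(k)$ by taking colimits to an isomorphism of locally ringed spaces $|\mathrm{Sp}(T/T^{U_a})|\to U_a$.  Write $g:T\to T/T^{U_a}$ for the localization functor and $(\gamma,\gamma^\#)$ for the morphism of locally ringed spaces it induces via the functoriality construction of the previous subsection; this $\gamma$ factors through $U_a$ because $[\bar F\circ g]\in U_a$ for every class $[\bar F]$ on the quotient.

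First I would promote $\gamma$ to a homeomorphism onto $U_a$.  Every open subset of $|\mathrm{Sp}(T/T^{U_a})|$ has the form $U_{\bar M}$ for some collection $\bar M\subset T/T^{U_a}$, and since the objects of the Verdier quotient are exactly the objects of $T$ we may write $\bar M=g(M)$.  The functoriality results recalled above give $\gamma^{-1}(U_M)=U_{g(M)}$; combined with the explicit description of the pointwise bijection (namely, each $F\in\mathscr U_a(k)$ factors uniquely through some $\bar F:T/T^{U_a}\to T_k$ with $\bar F\circ g=F$), this shows the bijection carries $U_{g(M)}$ onto $U_a\cap U_M$, which is exactly a basic open of the subspace $U_a$.

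Next I would verify that each stalk map is an isomorphism.  Fix $[\bar F]$ and set $[F]=[\bar F\circ g]$.  Since $T^{U_a}\subset \ker(F)$, the universal property of Verdier localization supplies an equivalence of triangulated tensor categories
\[(T/T^{U_a})/\ker(\bar F)\stackrel{\cong}{\longrightarrow} T/\ker(F)\]
carrying unit object to unit object.  By \rf{prop:stalk} both stalks are the endomorphism rings of these units, and the map $\gamma^\#_{[\bar F]}$ supplied by the functoriality construction is precisely the isomorphism between these endomorphism rings induced by the equivalence above; that it is a local homomorphism is already guaranteed by the general argument in the functoriality section.

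The only step that needs some care is the matching of open sets: one must check not merely $\gamma^{-1}(U_a\cap U_M)\supset U_{g(M)}$, which is formal, but equality of the two sets, which is where the explicit description of the pointwise bijection enters.  No genuine obstacle arises beyond this piece of bookkeeping, and assembling the three pieces above yields the claimed isomorphism of locally ringed spaces $|\mathrm{Sp}(T/T^{U_a})|\cong U_a$, manifestly induced by $g$.
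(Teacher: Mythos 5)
Your proposal is correct and takes essentially the same route as the paper: the paper's proof consists of the pointwise bijection $\mathrm{Sp}(T/T^{U_a})(k)\rightarrow\mathscr U_a(k)$ followed by ``taking colimits'', with the compatibility of topologies and structure sheaves left implicit in the functoriality construction. Your additional verifications --- matching the basic opens $U_{g(M)}$ with $U_a\cap U_M$ and identifying the stalk map via the equivalence $(T/T^{U_a})/\ker(\bar F)\simeq T/\ker(F)$ together with Proposition \rfthm{prop:stalk} --- simply make explicit the bookkeeping the paper omits.
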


\ppp{par:perfect}  Here we construct some triangulated tensor subcategories which will be used later in the proof of \rf{thm:main}.  For any tensor functor $F:T\rightarrow T_k$ denote by $\bar F: T/\ker(F)\rightarrow T_k$ the induced functor, and \[\bar F_1:\mathrm{End}_{T/\ker(F)}(1)\rightarrow \mathrm{End}_{T_k}(1)\cong k\] the evaluation of $\bar F$ on endomorphisms of $1$.

We denote by $k(F)$ the field \[\bar F(\mathrm{End}_{T/\ker(F)}(1))=\mathrm{End}_{T/\ker{(F)}}(1)/\ker(\bar F_1).\]and by $R(F)$ the local ring $\mathrm{End}_{T/\ker(F)}(1)$.  Notice that they depend only on $\ker(F)$, and in particular only on the class $[F]$.  The field $k(F)$ is contained in every field $k'$ so that $[F]$ is represented by a functor $T\rightarrow T_{k'}$.

Consider the functor \[F':a\mapsto \mathrm{Hom}_{T/\ker(F)}^\bullet(1,a)\otimes_{R(F)}k(F);\]here $\mathrm{Hom}^\bullet_{S}(x,y)$ denotes the direct sum \[\bigoplus_{j\in\mathbb Z}\mathrm{Hom}_S(x,y[j])\]for any triangulated category $S$.  Notice that this is a module over $\mathrm{End}_S(x)$.  

The functor $F'$ takes values in $T_{k(F)}$, since $F'(a)$ is naturally a graded $k(F)$-vector space; it is exact, but possibly not always a tensor functor.  Moreover, evaluation of $F$ at morphisms gives a natural transformation $\phi$ from $H:=F'\otimes_{k(F)}k$ to $F$:\[\phi_a: H(a)=\mathrm{Hom}_{T/\ker(F)}^\bullet(1,a)\otimes_{R(F)}k\longrightarrow \mathrm{Hom}_{T_k}^\bullet(F(1),F(a))\cong F(a).\]

We now have a diagram that is possibly not commutative:

\centerline{
\xymatrix{T\ar[r]^-{F'} \ar[dr]_-F& T_{k(F)}\ar[d]^-{-\otimes_{k(F)}k} \\ &T_k.}
}

Notice that $\phi_a: H(a)\rightarrow F(a)$ is an isomorphism whenever the image of $a$ in $T/\ker(F)$ is isomorphic to $1$.  Since both $H$ and $F$ are exact functors, the collection of objects $a\in T$ such that $\phi_a$ is an isomorphism is closed under shifting and taking cones.  

Denote by $P_{F}$ the strictly full triangulated subcategory of $T$ generated by objects $a\in T$ which are isomorphic to $1$ in $T/\ker(F)$; here by ``strictly'' we mean that if $a\in P_F$ and $a\cong b$ in $T$ then $b\in P_F$.  Equivalently, $P_F$ is the strictly full subcategory of $T$ consisting of objects whose images in $T/\ker(F)$ lie in the triangulated subcategory generated by $1\in T/\ker(F)$.  

Notice that $P_F$ is a strictly full \emph{triangulated tensor} subcategory of $T$ depending only on the class $[F]$, and we have a \emph{commutative} diagram

\begin{equation} \label{diag:PF} \xymatrix{P_F\ar[r]^-{F'} \ar[dr]_-F& T_{k(F)}\ar[d]^-{-\otimes_{k(F)}k} \\ &T_k.} \end{equation}

More precisely, the natural transformation $\phi$ is an isomorphism between the functors $H$ and $F$ when restricted to $P_F$.

Since $-\otimes_{k(F)}k$ is injective on objects, we see that $F'$ is an exact \emph{tensor} functor from $P_F$ to $T_{k(F)}$.  This gives the following

\begin{lemma}\label{lem:inj}  Let $F:T\rightarrow T_{k}$ and $G:T\rightarrow T_{K}$ be tensor functors with $\ker(F)=\ker(G)$; in particular $P_F=P_G=:P$.  Then $[F]$ and $[G]$ map to the same point under the natural map $|\mathrm{Sp}(T)|\rightarrow |\mathrm{Sp}(P)|$ induced by the inclusion $P\hookrightarrow T$.
\end{lemma}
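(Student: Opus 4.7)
The plan is to exhibit a single tensor functor $P\to T_{k(F)}$ from which both $[F|_P]$ and $[G|_P]$ arise by scalar extension, so that the two classes are identified in the colimit $|\mathrm{Sp}(P)|$. The first observation is that, since $\ker(F)=\ker(G)$, the local ring $R(F)=\mathrm{End}_{T/\ker(F)}(1)$ and its residue field $k(F)=R(F)/\ker(\bar F_1)$ depend only on $\ker(F)$, so $R(F)=R(G)$ and $k(F)=k(G)$ as abstract fields. Consequently the functor $F': a\mapsto \mathrm{Hom}^\bullet_{T/\ker(F)}(1,a)\otimes_{R(F)}k(F)$ of \rf{par:perfect} coincides with $G'$ as a functor into $T_{k(F)}$; call this common functor $F'=G'$. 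By the remark just before \rf{lem:inj}, its restriction to $P=P_F=P_G$ is an exact tensor functor $P\to T_{k(F)}$ and therefore defines a class $[F'|_P]\in\mathrm{Sp}(P)(k(F))$.

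The heart of the argument is to invoke the commutative diagram \rf{diag:PF} for both $F$ and $G$: applied to $F$ it yields a natural isomorphism $F|_P\cong (F'|_P)\otimes_{k(F)}k$, with $k(F)$ embedded in $k$ via $\bar F_1$, while applied to $G$ it yields $G|_P\cong (F'|_P)\otimes_{k(F)}K$ via $\bar G_1$. In the language of \rf{par:def} these identifications are exactly the statement that $[F|_P]\in\mathrm{Sp}(P)(k)$ and $[G|_P]\in\mathrm{Sp}(P)(K)$ are both images of the common class $[F'|_P]\in\mathrm{Sp}(P)(k(F))$ under the scalar-extension maps $\mathrm{Sp}(P)(k(F))\to\mathrm{Sp}(P)(k)$ and $\mathrm{Sp}(P)(k(F))\to\mathrm{Sp}(P)(K)$ induced by $k(F)\hookrightarrow k$ and $k(F)\hookrightarrow K$. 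The definition $|\mathrm{Sp}(P)|=\varinjlim_k\mathrm{Sp}(P)(k)$ then forces $[F|_P]=[F'|_P]=[G|_P]$ in $|\mathrm{Sp}(P)|$, which is the conclusion of the lemma.

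The step that requires some care is confirming that the natural isomorphism $\phi$ of \rf{par:perfect}, restricted to $P$, is an isomorphism of \emph{tensor} functors (not merely of exact functors), so that the resulting equalities of classes in $\mathrm{Sp}(P)(-)$ are legitimate. This, however, is built into the setup: the tensor structure on $(F'|_P)\otimes_{k(F)}k$ is transported from $F|_P$ via $\phi$, and the injectivity of $-\otimes_{k(F)}k$ on objects, noted at the end of \rf{par:perfect}, ensures that $F'|_P$ is itself a tensor functor. Beyond this the argument is pure bookkeeping of how $F$ and $G$ both factor, on the subcategory $P$, through the common intermediate functor $F'|_P$.
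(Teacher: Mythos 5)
Your argument is correct and is essentially the paper's own proof: both rest on the observation that $F'=G'$ since the construction depends only on $\ker(F)=\ker(G)$, and on the commutative diagram \rf{diag:PF} showing that $[F|_P]$ and $[G|_P]$ are both images of $[F'|_P]\in\mathrm{Sp}(P)(k(F))$ under scalar extension, hence equal in the colimit $|\mathrm{Sp}(P)|$. Your extra care about $F'|_P$ being a tensor functor is exactly the point the paper settles at the end of \rf{par:perfect}, so nothing new is needed.
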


\begin{proof}  Indeed, the condition $\ker(F)=\ker(G)$ implies moreover that \[F'=G': T\rightarrow T_{k(F)}=T_{k(G)}.\]

Diagram \rf{diag:PF} and the fact that $F'$ is a tensor functor shows that as points in $|\mathrm{Sp}(P)|$ we have $[F]=[F']=[G']=[G]$.\end{proof}


\mysec{Comparison morphisms}

\mysubsec{Prime spectrum}

\ppp{}  Balmer \cite{balmer_spectrumtt} defined for every triangulated tensor category $T$ a locally ringed space $\mathrm{Spec}(T)$ whose underlying space consists of prime tensor ideals in $T$.  This space is final among all topological spaces admitting a support data, which is a map from the objects in $T$ to the set of closed subsets of the topological space satisfying some natural axioms \cite[Definition 3.1]{balmer_spectrumtt}.  

By \rf{par:supportdata} we have a continuous map \[f: |\mathrm{Sp}(T)|\longrightarrow \mathrm{Spec}(T).\]  Explicitly, $f:[F]\mapsto \ker(F)$.  Moreover, a basis of open sets of $\mathrm{Spec}(T)$ are given by \[U(a):=\{\mathcal P\in\mathrm{Spec}(T)\,|\,a\in \mathcal P\},\]and we see that $f^{-1}(U(a))=U_a$.  The universal support data on $\mathrm{Spec}(T)$ is given by \[a\mapsto \mathrm{Spec}(T)-U(a).\]

\ppp{}  For any closed subset $Z$ of $\mathrm{Spec}(T)$, let\[T_Z:=\{a\in T\,|\, Z(a)\subset Z\};\]in other words, $T_Z$ consists of objects $a\in T$ such that $U(a)\supset U:=\mathrm{Spec}(T)-Z$.  From this we have \[T_Z=\bigcap_{\mathcal P\in U} \mathcal P.\]

For any open subset $U$ of $\mathrm{Spec}(T)$ we then have a natural inclusion \[T^{f^{-1}(U)} \supset T_Z,\]where $Z$ is the complement of $U$.  This induces a functor \[T/T_{Z}\longrightarrow T/T^{f^{-1}(U)},\]and in turn \[\mathrm{End}_{T/T_Z}(1)\longrightarrow f_*R(U)\] with notations as in \rf{par:R}.  Since the structure sheaf $\mathcal O_{\mathrm{Spec}(T)}$ is defined to be the sheafification of the presheaf given by the ring on the left side, we obtain a morphism $(f,f^\#)$ between ringed spaces.


\ppp{par:ex}  We remark that the comparison morphism $f: |\mathrm{Sp}(T)|\rightarrow \mathrm{Spec}(T)$ is not a bijection in general.  For a simple example, let $\ell$ be a field and $T$ be the orbit category $T_\ell/[m]$ whose objects are the same as those of $T_\ell$, but with \[\mathrm{Hom}_T(a,b)=\bigoplus_{j\in\mathbb Z}\mathrm{Hom}_{T_k}(a, b[jm]).\]  (Compositions are defined in the ``obvious'' way; see \cite[Definition 2.3]{CM_skewcat}.)

In \cite[Section 4, Theorem]{keller_orbitcat} and \cite[Lemma 2.3]{PX_rootcat} $T$ is proved to be triangulated so that the natural functor $T_\ell\rightarrow T$ (identity on objects) is an exact functor.  On the other hand, clearly the tensor product on $T_\ell$ gives a bifunctor $T\times T\rightarrow T$, and it is easy to show that this induced tensor product $\otimes_T$ on $T$ is exact in each variables:  It suffices to observe that the collection of objects $a\in T_\ell$ so that $a\otimes_T-:T\rightarrow T$ is an exact functor is closed under shifting and taking cones, and contains the unit object.

The set $|\mathrm{Sp}(T)|$ is empty:  In $T$ every object $a$ is isomorphic to $a[m]$, and in particular the unit object $\ell$ is isomorphic to its shift $\ell[m]$, hence there cannot be any exact tensor functor from $T$ to $T_k$ as long as $m\neq 0$.  Meanwhile, $\mathrm{Spec}(T)$ is always non-empty \cite[Proposition 2.3 (d)]{balmer_spectrumtt}.

\mysubsec{Scheme}

\ppp{}  For any scheme $X$ denote by $T_X$ the triangulated tensor category $D(X)_\mathrm{parf}$ \cite[3.1]{thomason_class}; its unit object is $\mathcal O_X$.  We define a morphism of locally ringed spaces \[\alpha: X\longrightarrow |\mathrm{Sp}(T_X)|\] as follows.

For any point $x\in X$, denote by $x^*:T_X\rightarrow T_{k(x)}$ the derived base change functor via $\mathrm{Spec}(k(x))\rightarrow X$, where $k(x)$ is the residue field of $\mathcal O_{X,x}$.  Then $\alpha:x\mapsto [x^*]$ defines $\alpha$ on the underlying sets.  

\ppp{par:topnoe}  Until the end of this section we will assume that $X$ is topologically noetherian; that is, its open sets satisfy the ascending chain condition.  Notice that this is equivalent to that all open sets are quasi-compact, and in particular implies that $X$ is quasi-compact and quasi-separated.

\ppp{}  To see that $\alpha$ is continuous, let $U_a\subset |\mathrm{Spec}(T_X)|$ be a basis open subset.  Then we have \[\alpha^{-1}(U_a)=\{x\in X\,|\,H^*(x^*a)=0\},\]namely, the complement on $X$ of the cohomology support.  By \cite[Lemma 3.4]{thomason_class} this is open, hence $\alpha$ is continuous.

\ppp{par:alphasharp}  Now we need to define $\alpha^\#_U:\mathcal O_{T_X}(U)\rightarrow \mathcal O_X(\alpha^{-1}(U))$ for any open set $U$ in $|\mathrm{Sp}(T_X)|$; for simplicity we denote $\alpha^{-1}(U)$ by $U'$.  For this it suffices to define on presheaves a homomorphism \[R(U)=\mathrm{End}_{T_X/T_X^U}(1)\rightarrow \mathcal O_X(U').\]

Notice that $\mathcal O_X(U')=\mathrm{End}_{T_{U'}}(\mathcal O_{U'})$ since $T_{U'}$ is a full subcategory of the derived category $D(U')=D(\mathcal O_{U'}\mathrm{-Mod})$ of the scheme $U'$.  Therefore it suffices to observe that the natural restriction functor $T_X\rightarrow T_{U'}$ sends objects in $T_X^U$ to zero, since then we have a factorization \begin{equation}\label{eq:U}T_X\longrightarrow T_X/T_X^U\longrightarrow T_{U'}\end{equation} of the restriction functor $j^*:T_X\rightarrow T_{U'}$.


\begin{lemma}\label{lem:ker} In the situation above we have $T_X^U\subset\ker(j^*:T_X\rightarrow T_{U'})$.
\end{lemma}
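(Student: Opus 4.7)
The plan is a short definition-chase followed by one standard ingredient. If $a\in T_X^U$, then by \rf{par:R} we have $U\subset U_a$, so pulling back along the continuous map $\alpha$ gives $U'=\alpha^{-1}(U)\subset \alpha^{-1}(U_a)$.

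Next I would identify $\alpha^{-1}(U_a)$ with $X\setminus\mathrm{Supp}(a)$. By the formula recorded just above the lemma, $\alpha^{-1}(U_a)=\{x\in X\,|\,H^*(x^*a)=0\}$, i.e.\ the locus where the derived fiber $x^*a=a\otimes^L_{\mathcal O_X}k(x)$ vanishes in $T_{k(x)}=D^b(\mathrm{Vect}_{k(x)})$. The key input is derived Nakayama for perfect complexes: for $a\in T_X$ and $x\in X$, the derived fiber $x^*a$ vanishes in $T_{k(x)}$ if and only if the stalk $a_x$ vanishes in $D(\mathcal O_{X,x})$. This identification is already implicit in the openness argument of \rf{par:alphasharp}, and the same Thomason reference \cite[Lemma 3.4]{thomason_class} suffices.

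Combining the two steps, $U'\cap \mathrm{Supp}(a)=\emptyset$, so the perfect complex $j^*a$ on $U'$ has empty support; since a perfect complex all of whose stalks vanish is zero in $D(U')$, this forces $j^*a=0$ in $T_{U'}$ and hence $a\in \ker(j^*)$ as required. The argument is essentially just a definition-chase; the only nontrivial ingredient is derived Nakayama for perfect complexes, which is standard, so I do not anticipate any genuine obstacle.
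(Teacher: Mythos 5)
Your argument is correct and is essentially the paper's own proof: unwinding $a\in T_X^U$ to get $x^*a=0$ for all $x\in U'$, then invoking the derived-fiber/support comparison for perfect complexes to conclude $j^*a=0$. The only quibble is the citation: the relevant statement in Thomason is \cite[Lemma 3.3 (a)]{thomason_class} (derived fibers detect vanishing), not Lemma 3.4, which concerns openness of the vanishing locus.
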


\begin{proof} $a\in T_X^U$ implies that for every $x\in U'$ we have that $x^*(a)$ is given by an acyclic complex in $T_{k(x)}$.  By \cite[Lemma 3.3 (a)]{thomason_class} this implies that $j^*(a)=a|_{U'}=0$.  
\end{proof}

\ppp{}  Let $x\in X$, then $x^*:T_X\rightarrow T_{k(x)}$ factors through \rf{eq:U} for any open set $U\subset |\mathrm{Sp}(T)|$ containing $[x^*]$ (in particular $U'$ contains $x$).  Hence if $r\in\mathcal O_{T,[x^*]}$ is represented by $\tilde r\in\mathrm{End}_{T/T^U}(1)$ (see \rf{par:stalk}) and mapped to zero under $\bar{x^*}:T_X/\ker(x^*)\rightarrow T_{k(x)}$, then the image of $\tilde r$ in $\mathrm{End}_{T_{U'}}(1)$ also maps to zero under $x^*: T_{U'}\rightarrow T_{k(x)}$.  Hence $(\alpha, \alpha^\#)$ is a morphism of locally ringed spaces. 



\mysubsec{Comparison theorem for schemes}

\ppp{}  Combining \rf{Prime spectrum} and \rf{Scheme} we have morphisms \[X\stackrel{\alpha}{\longrightarrow} |\mathrm{Sp}(T_X)|\stackrel{f}{\longrightarrow} \mathrm{Spec}(T_X)\] for any scheme $X$.  Denote by $\beta$ the composition $f\circ \alpha$.

In \cite[Corollary 5.6]{balmer_spectrumtt} it was proved that $\beta$ is an isomorphism of ringed spaces when $X$ is topologically noetherian.  We will prove under the same assumption that $\alpha$ is also an isomorphism.  

\begin{lemma}\label{lem:surj}  If $X$ is a topologically noetherian scheme, then for every $[F]\in |\mathrm{Sp}(T_X)|$ we have for a unique $x\in X$ such that $\ker(F)=\ker(x^*)$.
\end{lemma}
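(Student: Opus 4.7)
The plan is to reduce the claim to Balmer's bijectivity of $\beta$ by recognizing that the kernel of any tensor functor $F \colon T_X \to T_k$ is automatically a Balmer prime, so that $f([F]) = \ker(F)$ already lies in the image of $\beta$.

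First I would verify that $\ker(F)$ is a prime thick tensor ideal in the sense of \cite{balmer_spectrumtt}. Thickness and tensor-ideal-ness are immediate from the fact that $F$ is exact and strong monoidal. The ideal is proper because $F(1) \cong k \neq 0$ in $T_k$. For primality, suppose $a \otimes b \in \ker(F)$; then $F(a) \otimes_k F(b) \cong F(a \otimes b) = 0$ in $T_k = D^b(\mathrm{Vect}_k)$. Since $k$ is a field, taking cohomology and applying Künneth gives $H^i(F(a)) \otimes_k H^j(F(b)) = 0$ for all $i,j$, which forces $F(a) = 0$ or $F(b) = 0$. Hence $\ker(F) \in \mathrm{Spec}(T_X)$. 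In particular $\ker(F)$ depends only on the class $[F]$, so $f\colon |\mathrm{Sp}(T_X)| \to \mathrm{Spec}(T_X)$, $[F]\mapsto \ker(F)$, lands in $\mathrm{Spec}(T_X)$ as required.

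Next I would invoke Balmer's result, cited in the excerpt as \cite[Corollary 5.6]{balmer_spectrumtt}, which says that under the topologically noetherian hypothesis $\beta = f\circ\alpha\colon X \to \mathrm{Spec}(T_X)$ is a bijection (in fact an isomorphism of ringed spaces), with $\beta(x) = \ker(x^*)$. Applied to the prime $\ker(F) \in \mathrm{Spec}(T_X)$, this yields a unique $x \in X$ with $\ker(x^*) = \beta(x) = \ker(F)$, which is exactly the conclusion of the lemma.

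The only genuinely non-formal step is primality of $\ker(F)$; the rest is a direct appeal to Balmer's theorem. I don't anticipate any real obstacle, since once primality is established the existence and uniqueness of $x$ are just the injectivity and surjectivity of $\beta$ on underlying sets.
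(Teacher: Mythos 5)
Your proposal is correct and follows essentially the same route as the paper: the paper's proof is simply the observation that $f([F])=\ker(F)$ and $\beta(x)=\ker(x^*)$, combined with Balmer's result that $\beta$ is a bijection under the topologically noetherian hypothesis. The only difference is that you verify directly (via K\"unneth over the field $k$) that $\ker(F)$ is a prime thick tensor ideal, whereas the paper already has this from its earlier construction of the comparison morphism $f:|\mathrm{Sp}(T)|\rightarrow\mathrm{Spec}(T)$ via the universal support data; your check is a harmless, and correct, substitute.
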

\begin{proof}  The map $\beta$ sends $x\mapsto \ker(x^*)$, and $f$ sends $[F]\mapsto \ker(F)$.\end{proof}



\ppp{}   The condition for objects in $P_F$ in \rf{par:perfect} mimics the notion of perfect complexes on a scheme.  Indeed, we have 

\begin{prop}\label{prop:PF}  Let $X$ be a topologically noetherian scheme; $T_X=D(X)_\mathrm{parf}$.  Then for every $[F]\in |\mathrm{Sp}(T_X)|$ we have $T_X=P_F$.
\end{prop}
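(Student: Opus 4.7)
By \rf{lem:surj} we may assume $F=x^*$ for the unique $x\in X$ with $\ker(F)=\ker(x^*)$, since $P_F$ depends only on $\ker(F)$. The task then becomes showing that every perfect complex $a\in T_X$ has image in the localization $T_X/\ker(x^*)$ lying in the triangulated subcategory $\langle\mathcal{O}_X\rangle$ generated by $\mathcal{O}_X$. The guiding principle is that this localization ought to behave like $D^{\mathrm{perf}}(\mathcal{O}_{X,x})$, the derived category of perfect complexes over the local noetherian ring $\mathcal{O}_{X,x}$; the latter is generated as a triangulated category by its unit, since by Nakayama every perfect complex over a local ring is quasi-isomorphic to a bounded complex of finite free modules.

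Concretely, I would consider the derived stalk functor $\rho\colon T_X\to D^{\mathrm{perf}}(\mathcal{O}_{X,x})$, $a\mapsto a_x$. This is an exact tensor functor whose kernel on perfect complexes equals $\ker(x^*)$: clearly $\rho(a)=0$ implies $x^*(a)=\rho(a)\otimes^L k(x)=0$, and conversely Nakayama applied to the perfect $\mathcal{O}_{X,x}$-complex $a_x$ gives the reverse direction. Thus $\rho$ descends to a functor $\bar\rho\colon T_X/\ker(x^*)\to D^{\mathrm{perf}}(\mathcal{O}_{X,x})$, and I would aim to show $\bar\rho$ is an equivalence of triangulated categories. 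Combined with $D^{\mathrm{perf}}(\mathcal{O}_{X,x})=\langle\mathcal{O}_{X,x}\rangle$ and the fact that $\bar\rho$ sends $\mathcal{O}_X$ to $\mathcal{O}_{X,x}$, this will force $T_X/\ker(x^*)=\langle\mathcal{O}_X\rangle$.

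Essential surjectivity is straightforward: any bounded complex $[\mathcal{O}_{X,x}^{r_0}\to\cdots\to\mathcal{O}_{X,x}^{r_n}]$ of finite frees has matrix differentials whose entries lie in $\mathcal{O}_{X,x}\cong\mathrm{End}_{T_X/\ker(x^*)}(\mathcal{O}_X)$ by \rf{prop:stalk}; these entries yield morphisms $\bar d_i\colon\mathcal{O}_X^{r_i}\to\mathcal{O}_X^{r_{i+1}}$ in $T_X/\ker(x^*)$ satisfying $\bar d_{i+1}\bar d_i=0$ (as the ring isomorphism $\mathcal{O}_{X,x}\cong\mathrm{End}_{T_X/\ker(x^*)}(\mathcal{O}_X)$ transports matrix products to compositions), and iterated cones produce an object $c\in\langle\mathcal{O}_X\rangle$ with $\bar\rho(c)$ isomorphic to the starting complex.

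The main obstacle is full faithfulness of $\bar\rho$. Here I would use \rf{prop:stalk} to write $\mathrm{Hom}_{T_X/\ker(x^*)}(a,b)$ as a filtered colimit over $b'\in\ker(x^*)$ of $\mathrm{Hom}_{T_X/T_X^{U_{b'}}}(a,b)$. Topological noetherianness together with Thomason's classification of thick tensor ideals provides, for every open neighborhood $V\ni x$, a perfect complex $b'\in\ker(x^*)$ whose $T_X^{U_{b'}}$ is precisely the thick tensor ideal of perfect complexes supported on $X\setminus V$; Thomason's localization theorem identifies $T_X/T_X^{U_{b'}}$ with $T_V$ up to an idempotent completion that does not affect Hom-sets between objects in the image. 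Passing to the colimit as $V$ shrinks around $x$ then recovers $\mathrm{Hom}_{D^{\mathrm{perf}}(\mathcal{O}_{X,x})}(a_x,b_x)$, which is full faithfulness of $\bar\rho$. The chief technical hurdle thus lies in cleanly invoking Thomason's identifications and verifying that the colimit over \rf{prop:stalk} indeed matches the classical stalk.
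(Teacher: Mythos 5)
Your route is sound in outline but genuinely different from the paper's. You identify the local category $T_X/\ker(x^*)$ with $D^{\mathrm{perf}}(\mathcal O_{X,x})$, using the colimit description of Hom-sets from the proof of \rf{prop:stalk}, Thomason's classification of thick tensor ideals (to recognize $T_X^{U_{b'}}$ as the complexes supported on $X\setminus V$), and Thomason--Trobaugh localization, and then you conclude by generation of $D^{\mathrm{perf}}$ of a local ring by its unit (Nakayama). The paper is more economical: given $a$ it fixes a \emph{single} open $U'\ni x$ on which $a$ is a strict bounded complex of finite free modules, and proves only that $\bar j^*\colon T_X/\ker(x^*)\to T_{U'}/\ker(x^*)$ is fully faithful (\rf{lem:ff}), directly from \cite[5.2.3(a), 5.2.4(a)]{thomason_higherk}; no classification theorem, no shrinking colimit, no idempotent-completion bookkeeping, and no identification of the quotient with a local derived category are needed. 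What your approach buys is a stronger intermediate statement ($T_X/\ker(x^*)\simeq D^{\mathrm{perf}}(\mathcal O_{X,x})$, hence in particular $\mathrm{End}_{T_X/\ker(x^*)}(1)\cong\mathcal O_{X,x}$), at the cost of more moving parts: cofinality of the system $\{T_X^{U_{b'}}\}_{b'\in\ker(x^*)}$ with the system $\{T_{X,X\setminus V}\}_{V\ni x}$, commutation of perfect-complex Homs with the filtered colimit $\mathcal O_{X,x}=\varinjlim_V\mathcal O_X(V)$, and the density-versus-equivalence issue in Thomason's theorem, all of which you acknowledge and all of which do go through under the topological noetherianness hypothesis.

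Two points need repair, though neither is fatal. First, you cite \rf{prop:stalk} for the isomorphism $\mathrm{End}_{T_X/\ker(x^*)}(\mathcal O_X)\cong\mathcal O_{X,x}$; \rf{prop:stalk} only identifies $\mathcal O_{T_X,[x^*]}$ with $\mathrm{End}_{T_X/\ker(x^*)}(1)$ and shows it is local, and the paper only identifies this ring with $\mathcal O_{X,x}$ via \rf{thm:main}, which itself depends on \rf{prop:PF} --- so as written this step is circular. It is, however, exactly the case $a=b=\mathcal O_X$ of your own full-faithfulness colimit computation, so you should derive it there rather than from \rf{prop:stalk}. Second, realizing a bounded complex of finite frees by ``iterated cones'' in $T_X/\ker(x^*)$ requires a word about why the totalization exists (the relevant negative-degree Homs between sums of copies of $\mathcal O_X$ vanish, again by your full-faithfulness identification); alternatively, essential surjectivity is cleaner by descending the free complex over $\mathcal O_{X,x}$ to a strict free complex on a small affine $V\ni x$ and noting it lies in the essential image of the fully faithful functor $T_X/T_{X,X\setminus V}\to T_V$, since that image is a triangulated subcategory containing $\mathcal O_V$.
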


\begin{proof}  Let $a$ be a complex representing an object in $T_X$, and $[F]\in |\mathrm{Sp}(T_X)|$.  By \rf{lem:surj} there is a point $x\in X$ such that $\ker(x^*)=\ker(F)$.  Hence to prove that $a$ lies in $P_F$ it suffices to show that $a$ lies in the strictly full triangulated subcategory generated by $1$ in $T_X/\ker(x^*)$.  

So let $M\subset T_X/\ker(x^*)$ be a strictly full triangulated subcategory containing $1$; we need to show that $a\in M$.

Choose an open neighbourhood $U'$ of $x$ in $X$ on which $a$ is quasi-isomorphic to a strict perfect complex; it exists by the definition of perfect complexes.  By shrinking $U'$ if necessary we may assume that $a$ is quasi-isomorphic on $U'$ to a bounded complex of finite direct sums of $1=\mathcal O_{U'}$.  By \cite[Lemma 3.4]{thomason_class} we can find an object $b$ in $T_X$ whose support on $X$ is equal to $X-U'$; in particular we have $\alpha^{-1}(U_b)=U'$.

With notations as in \rf{lem:ff} below, we see that $\bar j^*(a)$ lies in the triangulated subcategory $\bar j^*(M)$ containing $1=\mathcal O_{U'}\in T_{U'}/\ker(x^*)$.  But by \rf{lem:ff} and the strict fullness of $M$ we see that $(\bar j^{*})^{-1}(\bar j^*(M))=M$, hence $a$ lies in $M$.  \end{proof}

\begin{lemma}\label{lem:ff}  Let $X$ be a topologically noetherian scheme; $T_X=D(X)_\mathrm{parf}$.  For any open subset $U\subset |\mathrm{Sp}(T_X)|$, let $U'=\alpha^{-1}(U)\subset X$; let $x\in U'$.  Then the natural functor $\bar j^*:T_X/\ker(x^*)\longrightarrow T_{U'}/\ker(x^*)$ is fully faithful.  In particular, if $\bar j^*(a)$ is isomorphic to $\bar j^*(b)$ then $a$ is isomorphic to $b$ in $T_X/\ker(x^*)$.
\end{lemma}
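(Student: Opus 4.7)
The plan is to reduce to Thomason's classification theorem and then handle the further Verdier localization at $\ker(x^*)$ using the fact that objects of $\ker(x^*)$ are exactly those perfect complexes that vanish on some neighbourhood of $x$.

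Write $Z = X - U'$ and let $T_{X,Z} \subseteq T_X$ denote the thick tensor ideal of perfect complexes set-theoretically supported on $Z$. Since $x \in U'$, the derived fiber $x^* a$ vanishes for every $a \in T_{X,Z}$, so $T_{X,Z} \subseteq \ker(x^*) =: K_A$; and since $x^* = x^*_{U'} \circ j^*$, also $K_A = (j^*)^{-1}(K_B)$ with $K_B := \ker(x^* \colon T_{U'} \to T_{k(x)})$, making $\bar j^*$ well-defined. Topological noetherianness of $X$ gives quasi-compactness of $U'$, so Thomason's classification theorem \cite{thomason_class} applies: $j^*$ induces a fully faithful functor $T_X/T_{X,Z} \hookrightarrow T_{U'}$ whose essential image is dense in the sense that every object of $T_{U'}$ is a direct summand of one from the image. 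Rewriting $T_X/K_A = (T_X/T_{X,Z})/(K_A/T_{X,Z})$, the problem becomes whether the induced map $(T_X/T_{X,Z})/(K_A/T_{X,Z}) \to T_{U'}/K_B$ remains fully faithful after this secondary quotient.

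The key local fact I would establish is that $e \in K_B$ iff $e$ vanishes on some open neighbourhood of $x$ in $U'$: if $e$ is strict perfect on a neighbourhood of $x$, Nakayama applied to $e \otimes^L k(x) = 0$ produces a contracting homotopy on a smaller neighbourhood. For fullness of $\bar j^*$, given a roof $j^*a \xleftarrow{s} w \xrightarrow{g} j^*b$ with $\mathrm{cone}(s) \in K_B$, Thomason's density yields $\tilde w \in T_X$ and $\delta \in T_{U'}$ with $j^*\tilde w \cong w \oplus \delta$; by shrinking to a neighbourhood $V \ni x$ on which $\delta$ admits a strict perfect representative, I would extend that representative trivially to a bounded complex of finite-rank free $\mathcal O_X$-modules and use it to modify $\tilde w$, absorbing $\delta$ up to an element of $K_B$. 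The fully faithfulness of $T_X/T_{X,Z} \hookrightarrow T_{U'}$ then lifts the adjusted roof to one in $T_X$ whose image in $T_{U'}/K_B$ recovers the original. Faithfulness is symmetric: a roof killed in $T_{U'}/K_B$ acquires a ``killing factor'' with cone in $K_B$, which the same extension-and-shrinking argument lifts to $T_X$ with cone in $K_A$.

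The main obstacle is the absorption step in the fullness argument: Thomason's density alone does not guarantee that the summand complement $\delta$ lies in $K_B$, and forcing this requires the topologically noetherian hypothesis together with the local strict perfect property, applied on a quasi-compact neighbourhood of $x$. A cleaner alternative I would keep in reserve is to identify both $T_X/K_A$ and $T_{U'}/K_B$ directly with the germ category $D^b(\mathcal O_{X,x})_{\mathrm{parf}}$ via pullback along $\mathrm{Spec}(\mathcal O_{X,x}) \to X$ (resp.\ $\to U'$), whence $\bar j^*$ becomes tautologically an equivalence and the lemma reduces to verifying that these germ identifications are compatible with the restriction $j^*$.
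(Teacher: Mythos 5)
There is a genuine gap, and you have located it yourself: the ``absorption step'' in the fullness argument does not work as described, and it is not a technicality. Thomason's density statement only gives $j^*\tilde w\cong w\oplus\delta$ with $\delta\in T_{U'}$ arbitrary, and to replace the roof $j^*a\stackrel{s}{\leftarrow}w\stackrel{g}{\rightarrow}j^*b$ by one with middle term $w\oplus\delta$ you must precompose with the projection, whose cone is $\mathrm{cone}(s)\oplus\delta[1]$; so you need $\delta\in K_B$, i.e.\ $\delta$ acyclic near $x$. Your proposed fix --- ``extend the strict perfect representative of $\delta$ trivially to a bounded complex of finite-rank free $\mathcal O_X$-modules'' --- is not an available operation: a strict perfect representative on a neighbourhood $V$ of $x$ consists of vector bundles (not necessarily free), and even when they are free the differentials are matrices of sections over $V$ that need not extend to $X$; and even granting some extension $\tilde\delta\in T_X$, comparing $j^*\tilde\delta$ with $\delta$ on all of $U'$ (not just near $x$) again requires extending a morphism from $V$ to $U'$, which is the very kind of statement being proved. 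Arranging the complement $\delta$ to lie in $K_B$ is essentially a $K_0$-obstruction problem of the same nature as the one Thomason's classification theorem addresses, and it cannot be dismissed in a sentence; as written, the main route is incomplete at exactly the step the whole argument hinges on.

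The paper avoids this entirely by never extending \emph{objects}, only \emph{morphisms}: given a fraction in $T_{U'}/\ker(x^*)$ between $j^*a$ and $j^*b$, one first uses \cite[Lemma 3.3 (a)]{thomason_class} (your ``key local fact'') to shrink to a neighbourhood $U''$ of $x$ on which the denominator becomes an honest isomorphism, so the fraction becomes an actual morphism $k^*j^*a\rightarrow k^*j^*b$ in $T_{U''}$; then \cite[Proposition 5.2.3 (a)]{thomason_higherk} lifts such a morphism to a roof on $X$ after precomposition with a map whose cone is acyclic on $U''$ (hence in $\ker(x^*)$), and \cite[Proposition 5.2.4 (a)]{thomason_higherk} plays the corresponding role for faithfulness. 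Since $a,b$ already live in $T_X$, no object ever needs to be extended and no $K_0$ issue arises. Your ``reserve'' alternative (identifying both quotients with perfect complexes over $\mathcal O_{X,x}$) is a sensible picture, but calling $\bar j^*$ ``tautologically an equivalence'' hides the content: the identification of $T_X/\ker(x^*)$ with the germ category requires precisely the morphism-extension inputs above (plus a spreading-out argument, using that complexes of free modules have liftable $K_0$-class, if you also want essential surjectivity, which the lemma does not even need). If you rewrite your proof along the morphism-level route, it becomes the paper's proof.
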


\begin{proof}  We first prove that $\bar j^*$ is faithful.  Let $a,b$ be objects in $T_X/\ker(x^*)$, and $a\stackrel{s}{\leftarrow} c\stackrel{f}{\rightarrow} b$ be a morphism which is mapped to the zero morphism in $T_{U'}/\ker(x^*)$.  This means that we have a commutative diagram in $T_{U'}$

\centerline{\xymatrix{& d \ar[dr]^-0 \ar[d]^-u \ar[dl]_-t\\ j^*a & j^*c \ar[l]^-{j^*s} \ar[r]_-{j^*f}& j^*b}
}
where $x^*t$ is an isomorphism.  This implies that $x^*u$ is also an isomorphism, hence by applying \cite[Lemma 3.3 (a)]{thomason_class} to $x^*\mathrm{cone}(u)$ we see that there is some open neighbourhood $U''$ of $x$ in $U'$ such that $k^*u$ is already an isomorphism, where $k: U''\rightarrow U'$ is the inclusion.  

This implies that $k^*j^*f=0=k^*j^*0$ in $T_{U''}$.  By applying \cite[Proposition 5.2.4 (a)]{thomason_higherk} to the inclusion $U''\subset X$, we have $e\in T_X$ and morphisms $c\stackrel{w}{\leftarrow} e \stackrel{g}{\rightarrow} b$ with $x^*w$ an isomorphism and such that the following diagram is commutative:

\centerline{\xymatrix{& e \ar[dr]^-{f\circ w=0} \ar[d]^-w \ar[dl]_-{s\circ w}\\ a & c \ar[l]^-{s} \ar[r]_-{f}& b.}
}

This says precisely that the morphism $a\stackrel{s}{\leftarrow} c\stackrel{f}{\rightarrow} b$ we began with is zero.

Now we show that $\bar j^*$ is full.  Let $a,b$ be in $T_X$ and $j^*a\stackrel{s}{\leftarrow} c \stackrel{f}{\rightarrow }j^*b$ be an element in $\mathrm{Hom}_{T_{U'}/\ker(x^*)}(j^*a,j^*b)$; in particular $x^*s$ is an isomorhpism.  Then as above there is an open neighbourhood $U''$ of $x$ in $U'$ such that $k^*s$ is already an isomorphism.

Now $(k^*f)\circ (k^*s)^{-1}: k^*j^*a\rightarrow k^*j^*b$ is a morphism in $T_{U''}$.  By \cite[Proposition 5.2.3 (a)]{thomason_higherk} there is an object $d\in T_X$ along with morphisms $a\stackrel{t}{\leftarrow} d \stackrel{g}{\rightarrow} b$ such that $k^*j^*t$ is an isomorphism in $T_{U''}$ and the following diagram is commutative:

\centerline{\xymatrix{ & k^*j^*d \ar[dl]_-{k^*j^*t} \ar[dr]^-{k^*j^*g}\\ k^*j^* a \ar[r]_-{(k^*s)^{-1}}& k^*c\ar[r]_-{k^*f}  & k^*j^*b.}
}

We will show that the image of $a\stackrel{t}{\leftarrow} d \stackrel{g}{\rightarrow} b$ under $\bar j^*$ is equal to $j^*a\stackrel{s}{\leftarrow} c \stackrel{f}{\rightarrow }j^*b$.  

By \cite[Proposition 5.2.3 (a)]{thomason_higherk} applied to the morphism $(k^*s)^{-1}\circ(k^*j^*t):k^*j^*d\rightarrow k^*c$ above, there is an object $e$ in $T_{U'}$ along with morphisms $j^*d\stackrel{r}{\leftarrow} e \stackrel{h}{\rightarrow} c$ so that $k^*r$ is an isomorphism and the following diagrams are commutative:

\centerline{\xymatrix{ & k^*j^*d \ar[dl]_-{k^*j^*t}   & k^*e\ar[l]_-{k^*w} \ar[dl]^-{k^*h}  \\ k^*j^* a & k^*c\ar[l]^-{k^*s},}
}
\bigskip
\centerline{\xymatrix{ k^*e \ar[r]^-{k^*w}  \ar[dr]_-{k^*h}& k^*j^*d  \ar[dr]^-{k^*j^*g}\\ & k^*c\ar[r]_-{k^*f}  & k^*j^*b.}
}

Now by the faithfulness of $\bar k^*:T_{U'}/\ker(x^*)\rightarrow T_{U''}/\ker(x^*)$ proved above, we see that there is another object $e'$ in $T_{U'}$ along with morphisms $e'\rightarrow e$, $e'\rightarrow j^*d$, and $e'\rightarrow c$, with the first morphism becoming an isomorphism under $x^*$, and such that the following diagram is commutative:

\centerline{\xymatrix{ & j^*d \ar[dl]_-{j^*t} \ar[dr]^-{j^*g}\\ j^* a&  e' \ar[d]\ar[u]\ar[l]\ar[r] & j^*b \\ & c\ar[ur]_-{f}  \ar[ul]^-{s}, }
}
where the arrows to the left and right are defined to be the obvious compositions.  This diagram says that the image of $a\stackrel{t}{\leftarrow} d \stackrel{g}{\rightarrow} b$ under $\bar j^*$ is equal to $j^*a\stackrel{s}{\leftarrow} c \stackrel{f}{\rightarrow }j^*b$, and so $\bar j^*$ is full.
\end{proof}

\ppp{}  Returning to the comparison morphisms \[X\stackrel{\alpha}{\longrightarrow} |\mathrm{Sp}(T_X)|\stackrel{f}{\longrightarrow} \mathrm{Spec}(T_X),\] we now have

\begin{prop}\label{prop:homeo}  The map $\alpha$ is a homeomorphism when $X$ is a topologically noetherian scheme.
\end{prop}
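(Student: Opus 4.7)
The plan is to leverage the composition $\beta=f\circ\alpha$, which Balmer has shown to be a homeomorphism onto $\mathrm{Spec}(T_X)$, together with the technical input provided by \rf{prop:PF} and \rf{lem:inj}. Continuity of $\alpha$ has already been established in \rf{par:alphasharp}, so the work reduces to proving that $\alpha$ is bijective; openness will then come for free.

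Injectivity of $\alpha$ is immediate: if $\alpha(x)=\alpha(y)$ then $\beta(x)=\beta(y)$ and the known bijectivity of $\beta$ forces $x=y$. For surjectivity, the key input is \rf{prop:PF}, which tells us that $P_F=T_X$ for every class $[F]\in|\mathrm{Sp}(T_X)|$. Given such $[F]$, \rf{lem:surj} produces a unique point $x\in X$ with $\ker(F)=\ker(x^*)$. Since both $F$ and $x^*$ satisfy $P_F=P_{x^*}=T_X$, I would apply \rf{lem:inj} to the identity inclusion $T_X\hookrightarrow T_X$; recalling from \rf{par:eqisom} that an equivalence induces an isomorphism on functorial spectra, this shows $[F]=[x^*]=\alpha(x)$. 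Thus $\alpha$ is surjective.

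To finish the homeomorphism claim, bijectivity of $\alpha$ combined with bijectivity of $\beta=f\circ\alpha$ forces $f$ to be bijective as well. For any open $U\subset X$, the set $\beta(U)$ is open in $\mathrm{Spec}(T_X)$ because $\beta$ is a homeomorphism, and then $\alpha(U)=f^{-1}(\beta(U))$ is open in $|\mathrm{Sp}(T_X)|$ by continuity of $f$. Hence $\alpha$ is an open continuous bijection, i.e., a homeomorphism.

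The substantive content of the argument sits entirely in the surjectivity step, and in turn in the already-proved \rf{prop:PF}: without knowing $P_F=T_X$, one has only equality of kernels coming from \rf{lem:surj}, which by itself falls short of producing an equality of classes in $|\mathrm{Sp}(T_X)|$. Everything else — injectivity, openness, and the reduction to $\beta$ — is formal once bijectivity is in hand.
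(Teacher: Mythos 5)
Your proposal is correct and follows essentially the same route as the paper: reduce to surjectivity using Balmer's result that $\beta=f\circ\alpha$ is a homeomorphism, then combine \rf{lem:surj}, \rf{prop:PF} (giving $P_F=P_{x^*}=T_X$), and \rf{lem:inj} applied to the identity inclusion to conclude $[F]=[x^*]$. Your explicit openness argument at the end is just a spelled-out version of the paper's implicit observation that a continuous bijection $\alpha$ with $f\circ\alpha$ a homeomorphism is itself a homeomorphism.
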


\begin{proof}  By \cite[Corollary 5.6]{balmer_spectrumtt}, the composition $\beta=f\circ\alpha$ is a homeomorphism.  Hence it suffices to show that the continuous map $\alpha$ is a bijection, and it only remains to show that it is surjective.

Let $[F]\in|\mathrm{Sp}(T_X)|$.  By \rf{lem:surj} there is a unique $x\in X$ such that $\ker(F)=\ker(x^*)$.  By \rf{prop:PF} we have $T_X=P_F=P_{x^*}$, and by \rf{lem:inj} we have $[F]=[x^*]$.\end{proof}

\ppp{}  Now we can finish the proof of

\begin{thm}\label{thm:main}  If $X$ is a topologically noetherian scheme and $T_X=D(X)_\mathrm{parf}$, then the comparison morphism \[X\stackrel{\alpha}{\longrightarrow} |\mathrm{Sp}(T_X)|\] is an isomorphism.
\end{thm}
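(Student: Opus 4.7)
The plan is to reduce this to the composed isomorphism $\beta = f\circ\alpha$ established by Balmer. Since \rf{prop:homeo} already gives that $\alpha$ is a homeomorphism, the only remaining task is to show that $\alpha^{\#}$ is an isomorphism of structure sheaves; the qualification ``locally'' is then automatic, since by \rf{prop:stalk} the stalks on both sides are already known to be local rings.

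My first step would be to show that $f^{\#}$ is itself an isomorphism. By \rf{prop:homeo} the map $\alpha$ is bijective, and combined with Balmer's result that $\beta$ is bijective this forces $f$ to be a bijection as well. By \rf{lem:surj}, $f([F]) = \ker(F)$ and every prime in $\mathrm{Spec}(T_X)$ arises in this way. Hence for any open $U\subset \mathrm{Spec}(T_X)$ with complement $Z$, the two tensor ideals
\[
T_X^{f^{-1}(U)} \;=\; \bigcap_{[F]\in f^{-1}(U)}\ker(F), \qquad T_{X,Z} \;=\; \bigcap_{\mathcal P\in U}\mathcal P
\]
coincide, so the natural functor $T_X/T_{X,Z}\to T_X/T_X^{f^{-1}(U)}$ entering the construction of $f^{\#}$ in \rf{Prime spectrum} is the identity. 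Therefore $f^{\#}$ is an isomorphism at the presheaf level, and in particular on sheafifications.

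Given this, writing $\beta^{\#} = \alpha^{\#}\circ f^{\#}$ (with the standard pushforward identifications, valid because $\alpha$ and $f$ are homeomorphisms) and invoking Balmer's \cite[Corollary 5.6]{balmer_spectrumtt} that $\beta^{\#}$ is an isomorphism of ringed spaces, we conclude that $\alpha^{\#}$ is an isomorphism as well. Combined with \rf{prop:homeo}, this gives the desired isomorphism of locally ringed spaces.

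The real work has already been absorbed into \rf{prop:homeo}, whose proof rests on the identification $T_X = P_F$ from \rf{prop:PF} and the gluing argument of \rf{lem:ff}; so the hard step is not at the sheaf level but in showing that every $[F]\in|\mathrm{Sp}(T_X)|$ is represented by a derived pullback $x^{*}$. Once that is in hand, the structure-sheaf comparison is essentially formal, and the proof reduces to the short chain of implications above. A convenient sanity check: by \rf{prop:stalk} the stalk $\mathcal O_{T_X,[x^{*}]}$ equals $\mathrm{End}_{T_X/\ker(x^{*})}(1)$, which is exactly the local ring Balmer identifies with $\mathcal O_{X,x}$ via $\beta$, so $\alpha^{\#}$ on stalks is the identity under these identifications.
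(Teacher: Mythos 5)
Your argument is correct, but it takes a genuinely different route from the paper at the sheaf level. The paper, after \rf{prop:homeo}, proves directly that for every open $U$ the presheaf map $\mathrm{End}_{T_X/T_X^U}(1)\rightarrow\mathrm{End}_{T_{U'}}(1)$ is an isomorphism, by observing that the induced restriction functor $j^*:T_X/T_X^U\rightarrow T_{U'}$ is fully faithful via \cite[Propositions 5.2.3 (a), 5.2.4 (a)]{thomason_higherk}; Balmer's \cite[Corollary 5.6]{balmer_spectrumtt} enters its proof only through the topological statement packaged in \rf{prop:homeo}. You instead invoke Balmer at full ringed-space strength: once $f$ is known to be bijective (note that the surjectivity of $f$ onto primes comes from the surjectivity of $\beta$, not from \rf{lem:surj} as you cite---\rf{lem:surj} says every $\ker(F)$ is some $\ker(x^*)$, which is the converse direction; the combination with $\beta$ bijective is what you actually need, and you do have it), the ideals $T_X^{f^{-1}(U)}$ and $T_{X,Z}$ coincide, so $f^\#$ is an isomorphism already at the presheaf level, and then $\alpha^\#$ is an isomorphism by two-out-of-three applied to $\beta^\#=f_*(\alpha^\#)\circ f^\#$, using that $f$ is a homeomorphism so that $f_*$ is an equivalence on sheaves. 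This is valid, and your observation $T_X^{f^{-1}(U)}=T_{X,Z}$ is a clean way to see that nothing new happens between $|\mathrm{Sp}(T_X)|$ and $\mathrm{Spec}(T_X)$ once surjectivity is known. What your route buys is that no further appeal to Thomason--Trobaugh is needed at the structure-sheaf stage; the cost is reliance on the sheaf part of Balmer's comparison isomorphism, together with the identification (asserted in the paper when it defines $\beta=f\circ\alpha$ and cites \cite[Corollary 5.6]{balmer_spectrumtt}) of that composite, with the sheaf map constructed in \rf{par:alphasharp} and \rf{Prime spectrum}, with Balmer's morphism---an identification the paper's own proof deliberately avoids by reproving the needed statement from Thomason--Trobaugh, which is essentially the same input Balmer uses. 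You correctly locate the real content in \rf{prop:homeo}, i.e.\ in \rf{prop:PF} and \rf{lem:ff}, and your remark that a ringed-space isomorphism between locally ringed spaces is automatically an isomorphism of locally ringed spaces is fine.
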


\begin{proof}  By \rf{prop:homeo} we know that $\alpha$ is a homeomorphism, so it only remains to show that $\alpha^\#$ as defined in \rf{par:alphasharp} is a sheaf isomorphism.  For this it suffices to show that for every open set $U\subset |\mathrm{Sp}(T_X)|$ the homomorphism \[\mathrm{End}_{T_X/T_X^U}(1)\longrightarrow \mathrm{End}_{T_{U'}}(1)\] induced by the restriction functor $j^*:T_X/T_X^U\rightarrow T_{U'}$ is an isomorphism; note that $U'=\alpha^{-1}(U)$ is now identified with $U$.  But by \cite[Proposition 5.2.3 (a), Proposition 5.2.4 (a)]{thomason_higherk} the functor $j^*$ is fully faithful.  \end{proof}

\mysubsec{Comparison theorem for quivers}

\ppp{}  Let $Q$ be a quiver possible with relations.  We require that the relations satisfy the property that the category $Q\mathrm{-Rep}_\ell$ of finite dimensional $Q$-representations over a field $\ell$ is monoidal under the usual vertex-wise tensor product.  Note that this tensor product is exact, and $T_{Q,\ell}:=D^b(Q\mathrm{-Rep}_\ell)$ is then a $\ell$-linear triangulated tensor category.

\ppp{}  Let $Q_0$ b the set of vertices of $Q$.  Then for any $v\in Q_0$ we have a tensor functor $F_v: T_{Q,\ell}\rightarrow T_\ell$ sending \[V\mapsto H^*(V)_v,\]where the cohomology $H^*(V)$ of an object $V\in T_{Q,\ell}$ is a complex with zero differential with objects in $Q\mathrm{-Rep}_\ell$.  The complex of $\ell$-vector spaces of $H^*(V)_v$ at the vertex $v$ is then naturally an object in $T_\ell$.  

This defines a map from $Q_0$ to $\mathrm{Sp}(T_{Q,\ell})(k)$ for every field extension $\ell\rightarrow k$, and hence a map from $Q_0$ to $|\mathrm{Sp}(T_{Q,\ell})|$.

The following is proved in \cite{liu_sierra_quiver}:

\begin{prop}  If $Q$ is finite and without oriented cycles, then for every field we have homeomorphisms \[Q_0\longrightarrow |\mathrm{Sp}(T_{Q,\ell})|\longrightarrow \mathrm{Spec}(T_{Q,\ell}),\]where $Q_0$ is given the discrete topology.  These maps are given by $v\mapsto [F_v]$ and $[F]\mapsto \ker(F)$. 
\end{prop}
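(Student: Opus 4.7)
The plan is to use the vertex-simple representations $S_v$ (one-dimensional at $v$, zero at all other vertices, with all arrows acting as zero) as a family of orthogonal idempotents under the vertex-wise tensor product. Indeed $S_v\otimes S_w=0$ for $v\ne w$ and $S_v\otimes S_v\cong S_v$; moreover, since $Q$ is finite acyclic, the unit $\mathbf 1$ admits a composition series with each $S_v$ appearing exactly once, so $[\mathbf 1]=\sum_v[S_v]$ in $K_0(T_{Q,\ell})$. Each $F_v$ is tensor because both cohomology and restriction at a single vertex are tensor-exact, and injectivity of $v\mapsto[F_v]$ is immediate from $F_v(S_w)\cong\delta_{vw}\ell$, an orthogonality preserved under any field extension.

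For surjectivity, let $F:T_{Q,\ell}\to T_k$ represent a class in $|\mathrm{Sp}(T_{Q,\ell})|$. From $F(S_v)\otimes F(S_v)\cong F(S_v)$ in $D^b(\mathrm{Vect}_k)$ and the fact that tensor product multiplies Poincar\'e polynomials, the polynomial $p(t)=p_{F(S_v)}(t)\in\mathbb Z[t,t^{-1}]$ satisfies $p^2=p$, forcing $F(S_v)\in\{0,k\}$ with $k$ in degree zero; orthogonality $F(S_v)\otimes F(S_w)=0$ for $v\ne w$ then allows at most one nonzero, and $[F(\mathbf 1)]=\sum_v[F(S_v)]=1$ in $K_0(T_k)=\mathbb Z$ pins down a unique vertex $v$ with $F(S_v)\cong k$. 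I then show $\ker(F)=\ker(F_v)$ via the computation: for any $a\in T_{Q,\ell}$, $a\otimes S_v$ is a complex supported at the single vertex $v$, hence quasi-isomorphic to $\bigoplus_i S_v^{\dim H^i(a)_v}[-i]$, and applying $F$ yields simultaneously $F(a)\otimes F(S_v)\cong F(a)$ and $\bigoplus_i k^{\dim H^i(a)_v}[-i]$, so $F(a)=0$ iff $H^*(a)_v=0$. To apply \rf{lem:inj} I need $P_F=T_{Q,\ell}$: in $T_{Q,\ell}/\ker(F_v)$ the composition series of $\mathbf 1$ collapses (since $S_w\cong 0$ for $w\ne v$) to an isomorphism $\mathbf 1\cong S_v$, and since every object of $T_{Q,\ell}$ lies in the triangulated subcategory generated by the simples, every image in the localization lies in $\langle\mathbf 1\rangle$; this places every object into $P_F$.

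For the topology, the object $a_v:=\bigoplus_{w\ne v}S_w$ gives $U_{a_v}=\{[F_v]\}$ and $U(a_v)=\{\ker(F_v)\}$, showing both $|\mathrm{Sp}(T_{Q,\ell})|$ and $\mathrm{Spec}(T_{Q,\ell})$ are discrete. For the comparison map to $\mathrm{Spec}$ to be surjective I would show that every prime tensor ideal $\mathcal P$ equals $\ker(F_v)$ for some $v$: the relation $S_v\otimes S_w=0\in\mathcal P$ together with primality forces all but at most one $S_v\notin\mathcal P$, properness of $\mathcal P$ combined with the composition series of $\mathbf 1$ forces at least one $S_v\notin\mathcal P$, and a thickness argument using the decomposition $a\otimes S_v\cong\bigoplus S_v[-i]^{\dim H^i(a)_v}$ identifies $\mathcal P$ with $\ker(F_v)$. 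The main obstacle is showing $\mathbf 1\cong S_v$ in the localization $T_{Q,\ell}/\ker(F_v)$: since $v$ need not be a source or sink of $Q$ there is no direct morphism $\mathbf 1\to S_v$ in $T_{Q,\ell}$, and the isomorphism must be assembled from the step-by-step collapse of the composition series, a bookkeeping which requires confirming the argument is not affected by the (possibly nonzero) relations on $Q$.
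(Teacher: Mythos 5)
The paper does not actually prove this proposition --- it is quoted from \cite{liu_sierra_quiver} --- so there is no internal proof to compare against; judged on its own terms and against the machinery this paper does provide, your argument is correct. It also fits the paper's template: equality of kernels, plus $P_F=T_{Q,\ell}$, plus Lemma \rf{lem:inj} is exactly the mechanism used for schemes in \rf{prop:homeo}. The individual steps check out: orthogonality and idempotence of the vertex simples, the Poincar\'e-polynomial argument forcing $F(S_v)\in\{0,k\}$, the count pinning down exactly one nonzero $F(S_v)$ (the $K_0$ argument works, though you could avoid $K_0$ by noting that if all $F(S_v)=0$ then $F(\mathbf 1)=0$ since $\mathbf 1$ is built from the simples), the splitting $a\otimes S_v\cong\bigoplus_i S_v^{\dim H^i(a)_v}[-i]$ giving $\ker F=\ker F_v$, and the discreteness and $\mathrm{Spec}$-surjectivity arguments via thickness and primality.

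The ``main obstacle'' you flag at the end is not an obstacle. You never need a morphism $\mathbf 1\to S_v$ in $T_{Q,\ell}$ itself: in the Verdier quotient $T_{Q,\ell}/\ker(F_v)$ each short exact sequence $0\to V_{i-1}\to V_i\to S_{w_i}\to 0$ of the composition series gives a distinguished triangle, and whenever the third term vanishes in the quotient the map $V_{i-1}\to V_i$ becomes an isomorphism there. Every $V_i$ whose factors avoid $v$ has zero $v$-component, hence lies in $\ker(F_v)$ and is zero in the quotient; so the one step where $S_v$ appears gives $V_j\cong S_v$, and all later steps are isomorphisms, yielding $\mathbf 1\cong S_v$ in the quotient no matter where $v$ sits in the series. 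Relations play no role in this bookkeeping: the standing hypothesis that $Q\mathrm{-Rep}_\ell$ is monoidal with the all-ones unit already guarantees that $\mathbf 1$ has finite length with each $S_v$ occurring exactly once, which is all the argument uses.
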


\ppp{}  Associated to the quiver $Q$ we have an algebra-valued functor on the category of fields sending \[k\mapsto A(Q,k):=kQ,\] the path algebra of $Q$.  Fixing a field $\ell$ and restricting this to field extensions $\ell\rightarrow k$, we will define a natural transformation between functors \[\phi: A(Q,-)\longrightarrow A(T_{Q,\ell},-),\]where the second functor was defined in \rf{par:path}.  

Explicitly, if $p$ is a path from vertex $v$ to vertex $w$ giving an element $p\in A(Q,k)$, then for any field extension $\ell\rightarrow k$, $\phi_k(p)$ is the element in $\mathrm{Hom}(F_v\otimes_\ell k, F_w\otimes_\ell k)\subset A(T_{Q,\ell},k)$ sending \[V\mapsto \phi_k(p)_V:=(V_p: F_v(V)\rightarrow F_w(V)).\]

The following is proved in \cite{liu_sierra_quiver}:

\begin{thm}\label{thm:quiver}  If $Q$ is finite and without oriented cycles, then for every field $\ell$ the natural transformation $\phi$ above is an isomorphism from $A(Q,-)$ to $A(T_{Q,\ell},-)$.
\end{thm}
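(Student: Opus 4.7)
The plan is to apply a derived Yoneda argument using the projective covers of the simple $Q$-representations, reducing the theorem to a direct identification of Hom-spaces between representable functors.

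First, I would reduce to a Hom-space computation. The preceding proposition --- more precisely, the argument for it given in \cite{liu_sierra_quiver} --- provides the stronger statement that every tensor functor $F : T_{Q,\ell} \to T_k$ is isomorphic, as a tensor functor, to $F_v \otimes_\ell k$ for a unique $v \in Q_0$. Granting this,
\[
A(T_{Q,\ell}, k) \;=\; \bigoplus_{v,w \in Q_0} \mathrm{Hom}\bigl(F_v \otimes_\ell k,\; F_w \otimes_\ell k\bigr),
\]
so the theorem reduces to showing, naturally in the extension $\ell \to k$, that the $(v,w)$-component of $\phi_k$ is an isomorphism $k Q(v,w) \to \mathrm{Hom}(F_v \otimes_\ell k, F_w \otimes_\ell k)$ for every pair $v, w$.

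Next I would represent each $F_v$ by a projective object. Because $Q$ is finite without oriented cycles, the algebra $\ell Q/I$ is finite-dimensional and the indecomposable projective cover $P_v$ of the simple $S_v$ lies in $Q\mathrm{-Rep}_\ell \subset T_{Q,\ell}$. Projectivity of $P_v$ in the heart gives the identification
\[
F_v(V) \;=\; H^*(V)_v \;\cong\; \bigoplus_{j} \mathrm{Hom}_{T_{Q,\ell}}(P_v, V[j])[j],
\]
so $F_v$ is the graded Hom functor $\mathrm{Hom}^\bullet(P_v, -)$. A Yoneda argument then shows that any natural transformation $\eta : F_v \otimes_\ell k \to F_w \otimes_\ell k$ is uniquely determined by $\eta_{P_v}(\mathrm{id}_{P_v} \otimes 1) \in \mathrm{Hom}(P_w, P_v) \otimes_\ell k$, with every element of this space arising in this way. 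Projectivity of $P_w$ together with the absence of oriented cycles forces $\mathrm{Hom}_{T_{Q,\ell}}(P_w, P_v[j]) = 0$ for $j \neq 0$ (and $\mathrm{End}(P_v) = \ell$), so there are no contributions from higher Ext groups. Identifying $\mathrm{Hom}_{Q\mathrm{-Rep}_\ell}(P_w, P_v)$ with $\ell Q(v,w)$ --- the space of paths from $v$ to $w$ modulo relations --- via the standard module-theoretic computation, we obtain
\[
\mathrm{Hom}\bigl(F_v \otimes_\ell k,\; F_w \otimes_\ell k\bigr) \;\cong\; \ell Q(v,w) \otimes_\ell k \;=\; k Q(v,w).
\]

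Finally, I would verify that this isomorphism coincides with $\phi_k$. A path $p$ from $v$ to $w$ corresponds under the Yoneda identification to the morphism $P_w \to P_v$ sending $e_w$ to $p \in (P_v)_w$, and evaluating the induced natural transformation on a representation $V$ gives precisely the linear map $V_p : V_v \to V_w$ from the definition of $\phi_k(p)$. Naturality in $k$ is then automatic from the base-change description. The main obstacle will be securing the first reduction: establishing that $\mathrm{Sp}(T_{Q,\ell})(k)/\!\cong$ equals $Q_0$ over each individual field $k$, rather than only after the colimit that defines $|\mathrm{Sp}(T_{Q,\ell})|$. This rigidity of tensor functors on $T_{Q,\ell}$ is the substantive input to be borrowed from \cite{liu_sierra_quiver}; once it is in hand, the Yoneda computation and the matching with $\phi_k$ are essentially formal.
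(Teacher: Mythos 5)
The paper itself does not prove \rfthm{thm:quiver}; it quotes the result from \cite{liu_sierra_quiver}, so there is no in-paper argument to measure yours against. On its own terms your outline has a reasonable shape (classify the tensor functors $T_{Q,\ell}\rightarrow T_k$ over each field $k$, then compute natural transformations via the projectives $P_v$), and you are right that the classification of $\mathrm{Sp}(T_{Q,\ell})(k)$ over each individual $k$ is strictly more than the statement about $|\mathrm{Sp}(T_{Q,\ell})|$ in the preceding proposition; you flag it as borrowed input, which is honest, though it means the substantive rigidity statement is still imported from the same reference the paper cites for the whole theorem.

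The genuine gap is in the Yoneda step. Since $T_k\simeq\bigoplus_j\mathrm{Vect}_k[j]$ has no nonzero morphisms between distinct degrees, the functors $F_v\otimes_\ell k$ and $F_w\otimes_\ell k$ split functorially into their degree-$j$ pieces $V\mapsto H^j(V)_v\otimes_\ell k\cong \mathrm{Hom}_{T_{Q,\ell}}(P_v[-j],V)\otimes_\ell k$, and a natural transformation of the underlying functors decomposes into an \emph{independent} choice in every degree; hence the space of plain natural transformations is $\prod_{j\in\mathbb Z}\mathrm{Hom}_{\ell Q}(P_w,P_v)\otimes_\ell k$, not $kQ(v,w)$. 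In particular your claim that $\eta$ is determined by $\eta_{P_v}(\mathrm{id}\otimes 1)$ fails: $F_v(P_v)$ is concentrated in degree $0$, so $\eta_{P_v}$ records only the degree-$0$ component, and nothing in plain naturality relates $\eta_{P_v[-j]}$ for different $j$ (already for the one-vertex quiver the product above is infinite dimensional while $kQ=k$). The Ext-vanishing $\mathrm{Hom}(P_w,P_v[j])=0$ for $j\neq0$ that you invoke does not touch this issue. To get the count $kQ(v,w)$ you must identify and actually use the restricted notion of natural transformation implicit in the definition of $A(T,k)$ --- most plausibly compatibility with the translation functors (graded/exact natural transformations), which forces $\eta_{P_v[-j]}$ to be the shift of $\eta_{P_v}$ and collapses the product to a single copy of $kQ(v,w)$; note that passing to the span of monoidal natural transformations alone would not suffice either, since the families acting by $t^{j}$ in degree $j$ are monoidal and already span an infinite-dimensional subspace in the one-vertex example. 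With that restriction made explicit and carried through the Yoneda argument, the remaining steps you sketch (the identification $F_v\cong\mathrm{Hom}^\bullet(P_v,-)$, $\mathrm{Hom}(P_w,P_v)\cong\ell Q(v,w)$, the matching with $\phi_k$, and naturality in $k$) are sound.
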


\mysec{Rigidifying tensor products}

\mysubsec{Identifying tensor products}

Here we prove a few simple preliminary lemmas that will help us compare tensor products on triangulated categories.

\ppp{}  We consider first the situation when there is an exact equivalence $g: T\rightarrow T'$ of triangulated categories, when $T'$ has a given tensor product $\otimes'$.  Denote by $g'$ a quasi-inverse to $g$, then we can define a tensor product $\otimes_1$ on $T$ by setting \[a\otimes_1b:=g'(g(a)\otimes'g(b)).\]

\begin{lemma}\label{lem:t1}  The functor $g: (T,\otimes_1)\rightarrow (T',\otimes')$ is an equivalence of triangulated tensor categories.  In particular we have an isomorphism $|\mathrm{Sp}(T',\otimes')|\cong |\mathrm{Sp}(T,\otimes_1)|$ of locally ringed spaces.
\end{lemma}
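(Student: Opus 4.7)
The plan is to transport the tensor structure along the equivalence $g$ in the standard way. First, I would define the unit object of $(T,\otimes_1)$ to be $1_T:=g'(1_{T'})$. The natural isomorphism $\eta:gg'\Rightarrow \mathrm{id}_{T'}$ coming from the adjoint equivalence then supplies, for all $a,b\in T$, a distinguished isomorphism
\[
g(a\otimes_1 b) \;=\; g\bigl(g'(g(a)\otimes' g(b))\bigr) \;\xrightarrow{\;\eta\;}\; g(a)\otimes' g(b),
\]
and similarly $g(1_T)=g(g'(1_{T'}))\xrightarrow{\eta}1_{T'}$. These will be the strong monoidal coherence isomorphisms for $g$.

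Next, I would verify that $(T,\otimes_1)$ is an object of $\mathbf{TT}$. Exactness of $\otimes_1$ in each variable is immediate because $g$, $g'$, and $\otimes'$ are all exact (note that $g'$ is exact as recalled in \rf{par:TT}). To define the associator and left/right unitors of $\otimes_1$, I transport those of $\otimes'$ via $\eta$: for example, the associator
\[
a\otimes_1(b\otimes_1 c) \;\longrightarrow\; (a\otimes_1 b)\otimes_1 c
\]
is obtained by applying $g'$ to the associator of $\otimes'$ after replacing $g$ of each side with $g(a)\otimes'g(b)\otimes'g(c)$ using $\eta$. The pentagon and triangle axioms then follow formally from those of $\otimes'$, because $g'$ is a functor and $\eta$ is natural; this is the routine ``transport of structure'' along an equivalence.

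Once $(T,\otimes_1)\in\mathbf{TT}$ is set up, the data defining $g$ as a strong monoidal functor are tautological from the definition of $\otimes_1$: the coherence isomorphisms above are built from $\eta$ and so satisfy the hexagon and unit axioms automatically. Since $g$ is also an equivalence of the underlying triangulated categories by assumption, it becomes an equivalence in $\mathbf{TT}$. Finally, the asserted isomorphism of locally ringed spaces $|\mathrm{Sp}(T',\otimes')|\cong|\mathrm{Sp}(T,\otimes_1)|$ is then nothing more than an application of \rf{par:eqisom}.

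The main obstacle, such as it is, is purely bookkeeping: making sure the transported associator and unitors satisfy pentagon/triangle. All of this is forced by naturality of $\eta$ and is standard, so I would probably state it as transport of structure without writing out the diagrams.
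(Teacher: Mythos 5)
Your proposal is correct and follows the same route as the paper: the paper's proof is exactly the observation $g(a\otimes_1 b)=gg'(g(a)\otimes' g(b))\cong g(a)\otimes' g(b)$ via the equivalence isomorphism $gg'\cong\mathrm{id}_{T'}$, with the transported unit/associator data and exactness of $\otimes_1$ (from exactness of $g$, $g'$, $\otimes'$) left implicit, and the locally ringed space statement deferred to \rf{par:eqisom} just as you do. Your write-up simply makes the transport-of-structure bookkeeping explicit, which the paper omits.
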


\begin{proof}  It only remains to show that $g$ is a strong monoidal functor:\[g(a\otimes_1b)=gg'(g(a)\otimes'g(b))\cong g(a)\otimes'g(b).\]\end{proof}

\ppp{}  Let $u$ be an object in a triangulated tensor category $(T,\otimes_1)$ be such that the exact functor $m: T\rightarrow T$ defined by $a\mapsto u\otimes_1a$ is an equivalence with quasi-inverse $m'$.  Then we can define a tensor product $\otimes_0$ on $T$ by setting \[a\otimes_0b:=m(m'(a)\otimes_1 m'(b)).\]

\begin{lemma}\label{lem:t2}  The functor $m: (T,\otimes_1)\rightarrow (T, \otimes_0)$ is an equivalence of triangulated tensor categories.  In particular we have an isomorphism $|\mathrm{Sp}(T,\otimes_0)|\cong |\mathrm{Sp}(T,\otimes_1)|$ of locally ringed spaces.

\end{lemma}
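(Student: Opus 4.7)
The plan is to mimic the proof of \rf{lem:t1}: since $m$ is by hypothesis already an exact equivalence of triangulated categories, it suffices to verify that $(T,\otimes_0)$ is itself a triangulated tensor category and that $m$ is a strong monoidal functor from $(T,\otimes_1)$ to $(T,\otimes_0)$; the conclusion on spectra then follows immediately from \rf{par:eqisom}. The unit object of $\otimes_0$ will turn out to be $u=m(1)$, where $1$ is the unit of $\otimes_1$, and the associator and unitors of $\otimes_0$ will be transported from those of $\otimes_1$ across the equivalence $m$.

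The key computation is \[ m(a)\otimes_0 m(b) \;=\; m(m'(m(a))\otimes_1 m'(m(b))) \;\cong\; m(a\otimes_1 b), \] where the isomorphism uses the natural isomorphism $m'\circ m\cong \mathrm{id}$ coming from the fact that $m$ is an equivalence. Similarly, $m(1)\otimes_0 a = m(m'(m(1))\otimes_1 m'(a))\cong m(1\otimes_1 m'(a))\cong m(m'(a))\cong a$, which yields the unit constraint. Exactness of $\otimes_0$ in each variable is automatic, since by definition $\otimes_0$ is a composition of the exact functors $m$, $m'$, and $\otimes_1$. Once $m$ is established as an equivalence of triangulated tensor categories, \rf{par:eqisom} gives the isomorphism $|\mathrm{Sp}(T,\otimes_0)|\cong |\mathrm{Sp}(T,\otimes_1)|$ of locally ringed spaces.

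I do not anticipate a genuine obstacle here; the only mildly tedious point is checking the pentagon and unit-triangle coherences for $\otimes_0$, but each of these is obtained by applying $m$ to the corresponding coherence diagram for $\otimes_1$ (with arguments $m'(a),m'(b),m'(c),\ldots$) and then collapsing the extra $m'\circ m$ factors via the unit of the equivalence. The verification is therefore entirely formal and follows the same pattern as in \rf{lem:t1}.
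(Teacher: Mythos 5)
Your proposal is correct and follows essentially the same route as the paper: the paper's entire proof is the single computation $m(a\otimes_1 b)\cong m(m'm(a)\otimes_1 m'm(b))=m(a)\otimes_0 m(b)$, which is exactly your key step, with the unit, exactness, and coherence checks left implicit just as you describe them as routine.
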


Note that since the unit object $1$ with respect to $\otimes_1$ is isomorphic to $g'(u)$, the object $u$ is the unit object with respect to $\otimes_0$.

\begin{proof}  We compute: \[m(a\otimes_1 b)\cong m(m'm(a)\otimes_1 m'm(b))=m(a)\otimes_0m(b).\]  \end{proof}

\ppp{}  Let $T$ be a triangulated category, and $W\subset T$ a full subcategory, not necessarily triangulated.  We say that \emph{$W$ generates $T$} if for every $a\in T$ there are finitely many objects $w_1, w_2, \ldots, w_m$ in $W$ and distinguished triangles in $T$:
\[a_{1}\longrightarrow w_1\longrightarrow a\longrightarrow a_1[1],\]
\[a_2\longrightarrow w_2\longrightarrow a_1\longrightarrow a_2[1],\]
\[\ldots,\]
\[w_m\longrightarrow w_{m-1}\longrightarrow a_{m-1}\longrightarrow w_m[1],\]for some objects $a_1, \ldots, a_{m-1}$.

\begin{lemma}\label{lem:t3}  If $(T,\otimes)$ is a triangulated tensor category, then $\otimes$ is determined by its restriction to $W\times T\rightarrow T$ for any full subcategory $W$ generating $T$.
\end{lemma}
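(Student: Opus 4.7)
The plan is to interpret the claim as follows: any two tensor products $\otimes$ and $\otimes'$ on $T$ whose restrictions to $W\times T$ are naturally isomorphic as bifunctors must themselves be naturally isomorphic as bifunctors on $T\times T$. The reconstruction exploits the exactness of $\otimes$ in each variable together with the generating property of $W$.

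First I would fix $b\in T$ and consider the exact functors $-\otimes b$ and $-\otimes' b$ from $T$ to $T$. For any $a\in T$, the generating hypothesis supplies objects $w_1,\ldots,w_m\in W$ and distinguished triangles $a_i\to w_i\to a_{i-1}\to a_i[1]$ (with $a_0=a$) ending in an object of $W$. Applying both $-\otimes b$ and $-\otimes'b$ to each triangle and invoking exactness produces two parallel sequences of distinguished triangles, in which the terms $w_i\otimes b$ and $w_i\otimes' b$ are already identified with each other via the given isomorphism on $W\times T$. Proceeding by downward induction starting from the innermost triangle (all of whose objects lie in $W$, hence are directly identified), one builds compatible isomorphisms $a_j\otimes b\cong a_j\otimes' b$ step by step: at each stage, axiom TR3 completes the partial morphism of triangles to a full morphism, and the standard fact that a map of distinguished triangles whose first two components are isomorphisms has an isomorphism as third component yields the next piece. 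Iterating up the chain produces the desired isomorphism $a\otimes b\cong a\otimes' b$. The analogous induction in the second variable, and the bifunctorial action on morphisms, are handled similarly using exactness in the other slot together with the generating property.

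The main obstacle is that the completing morphism supplied by TR3 is not uniquely determined, so the inductive construction involves genuine choices and the resulting isomorphism on $T\times T$ is not canonical. This is why the lemma must be interpreted as an identification up to natural isomorphism rather than on-the-nose equality. The subtlety to manage is naturality: compatibility of the constructed isomorphism with composition of morphisms in $T$ whose sources or targets lie outside $W$ must be checked, and this requires propagating the naturality already present on $W\times T$ through the inductive triangles. The key observation making this go through is that the morphisms whose cones we take, such as $a_i\to w_i$, have their tensor products $(a_i\to w_i)\otimes\mathrm{id}_b$ determined as the connecting maps of the distinguished triangles already fixed at previous inductive stages, so the squares that need to commute do so automatically, and the extension respects the bifunctorial structure.
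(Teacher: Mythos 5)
Your proposal follows essentially the same route as the paper's own (two-line) proof: resolve $a$ by the generating triangles, use the given natural isomorphism $\phi_{-,b}$ on the $W$-terms, and propagate an isomorphism $a\otimes b\cong a\otimes' b$ up the chain of triangles via exactness of both products and TR3, so your write-up is if anything more explicit than the paper's sketch. One caveat: your final claim that the required squares ``commute automatically'' is not actually justified --- compatibility of the non-unique TR3 fill-ins on the intermediate cones $a_i$ with the maps $a_i\to w_i$, and hence naturality of the extended isomorphism, is a genuine subtlety, but it is one the paper's proof leaves equally unaddressed.
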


\begin{proof}  More precisely, if $\otimes$ and $\otimes_0$ are tensor products on $T$ whose restrictions to $W\times T$ are isomorphic, then the isomorphism extends to an isomorphism between $\otimes$ and $\otimes_0$ on $T\times T$.

Indeed, suppose $\phi_{w,b}: w\otimes b\stackrel{\cong}{\longrightarrow} w\otimes_0 b$, $w\in W$, $a\in T$, is a natural isomorphism between the restrictions, we need to extend to an isomorphism $\phi_{a,b}: a\otimes b\stackrel{\cong}{\longrightarrow} a\otimes_0 b$, $a,b\in T$.  To this end we only need to find a ``resolution'' of $a$ by objects in $W$ as above and apply $\phi_{-,b}: -\otimes b\rightarrow -\otimes_0 b$ to every distinguished triangle.  \end{proof}

\mysubsec{Example: smooth varieties with ample (anti-)canonical bundles}

\ppp{}  We will give a proof of the following 

\begin{thm}\label{thm:BO} \cite[Theorem 2.5]{BO_reconstruction}  Let $X$ be an irreducible smooth projective variety over a field $k$ with ample (anti-)canonical bundle.  If $X'$ is another irreducible smooth projective variety over $k$ such that $D^b(X)$ and $D^b(X')$ are equivalent as triangulated categories, then $X$ is isomorphic to $X'$.  \end{thm}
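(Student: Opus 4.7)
The plan is to reinterpret Bondal-Orlov as a uniqueness statement for the tensor product on $D^b(X)$, as anticipated in the introduction. Given a triangulated equivalence $g \colon D^b(X) \to D^b(X')$, we apply \rf{lem:t1} to transport the tensor product $\otimes'$ of $D^b(X')$ back along $g$, producing a second tensor product $\otimes_1$ on $D^b(X)$. By construction $(D^b(X), \otimes_1)$ is equivalent as a triangulated tensor category to $(D^b(X'), \otimes')$, so by \rf{par:eqisom} together with \rf{thm:main} we obtain $|\mathrm{Sp}(D^b(X), \otimes_1)| \cong X'$ as locally ringed spaces. Since also $|\mathrm{Sp}(D^b(X), \otimes)| \cong X$, it suffices to show that these two functorial spectra are isomorphic.

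To compare the two tensor structures on $D^b(X)$, we first adjust units. The unit $u := g^{-1}(\mathcal{O}_{X'})$ of $\otimes_1$ is tensor-invertible with respect to $\otimes_1$, so \rf{lem:t2} produces an equivalent tensor product $\otimes_0$ on $D^b(X)$ having $u$ as its unit, and $|\mathrm{Sp}(D^b(X), \otimes_0)| \cong |\mathrm{Sp}(D^b(X), \otimes_1)|$. To describe $u$ from the point of view of $(D^b(X), \otimes)$ we use that the Serre functor is an invariant of the triangulated structure alone, so for both tensor products it coincides with tensoring by $\omega_X[\dim X]$. Under the ample (anti-)canonical hypothesis, the Bondal-Orlov characterization of \emph{point objects} (those $P$ fixed up to shift by the Serre functor, with $\mathrm{Hom}^{<0}(P,P)=0$ and $\mathrm{End}(P)=k$) identifies the skyscrapers $k(x)[i]$ intrinsically; an analogous argument identifies the tensor-invertible objects of $(D^b(X), \otimes)$ as shifts of line bundles, and in particular $u$ is a line bundle up to shift.

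With $\otimes$ and $\otimes_0$ sharing the same unit and the same Serre functor, \rf{lem:t3} finishes the proof. Choose a generating full subcategory $W \subset D^b(X)$ — for instance shifts of powers of an ample line bundle, which generate $D^b(X)$ in the sense of \rf{lem:t3} for a smooth projective variety — and check that $\otimes$ and $\otimes_0$ are naturally isomorphic on $W \times D^b(X)$ using the intrinsic description of line bundles. Then \rf{lem:t3} extends this agreement to all of $D^b(X) \times D^b(X)$, giving $|\mathrm{Sp}(D^b(X), \otimes_0)| \cong |\mathrm{Sp}(D^b(X), \otimes)|$ and hence $X \cong X'$. The main obstacle is the intrinsic identification of line bundles and point objects from the triangulated structure alone; this is precisely where the ample (anti-)canonical hypothesis enters, and what \rf{lem:t1}--\rf{lem:t3} have been designed to isolate from the geometric content of Bondal-Orlov.
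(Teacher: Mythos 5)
Your overall strategy matches the paper's: transport $\otimes'$ along $g$ via \rf{lem:t1}, adjust by \rf{lem:t2}, and conclude by \rf{lem:t3} using the Serre functor and generation by powers of the canonical bundle. But there is a genuine gap at the adjustment step. You apply \rf{lem:t2} with $u:=g^{-1}(\mathcal O_{X'})$, which is already the unit of $\otimes_1$; tensoring with the unit is (isomorphic to) the identity, so this application is vacuous and produces $\otimes_0\cong\otimes_1$, whose unit is still $g'(\mathcal O_{X'})$ and not $\mathcal O_X$. Your subsequent claim that ``$\otimes$ and $\otimes_0$ share the same unit'' is therefore unjustified, and it is false in general. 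The correct move is the opposite normalization: prove that $g(\mathcal O_X)$ is (a shift of) a line bundle on $X'$ --- this is \rf{lem:points}, the actual geometric input from Bondal--Orlov's point-object result --- so that $m:=\mathcal O_X\otimes_1(-)$ is an autoequivalence of $T_X$, and then apply \rf{lem:t2} with $u=\mathcal O_X$ to obtain $\otimes_0$ whose unit is $\mathcal O_X$, the unit of the original $\otimes$.

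This omission breaks your final step. Writing $\omega_1:=g'(\omega')$, one has $(-)\otimes_1\omega_1\cong S$, but $\omega_1$ need not be isomorphic to $\omega$ (it differs by the line bundle $g(\mathcal O_X)$), so without the unit renormalization $(-)\otimes_1\omega$ is \emph{not} the Serre functor and the two products do not agree on $W\times T_X$. After renormalizing one computes $a\otimes_0\omega\cong S(a)\cong a\otimes\omega$, and only then do both restrictions to $W\times T_X$ become direct sums of iterates of shifts of $S$. Two smaller points: $W$ must consist of sums of shifts of powers of $\omega$ specifically (ample or anti-ample by hypothesis), not of an arbitrary ample line bundle, since $\omega$ is the only line bundle the Serre functor lets you compare across the two tensor structures; and your appeal to ``the intrinsic description of line bundles'' should be replaced by the explicit computation $(-)\otimes_0\omega\cong S\cong(-)\otimes\omega$, which is what makes the \rf{lem:t3} comparison check actually go through.
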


Our proof is not completely independent of that in \cite{BO_reconstruction}; see \rf{lem:points} below.

Under the smoothness assumption, $D^b(X)$ is equivalent to $T_X=D(X)_\mathrm{parf}$, therefore the knowledge of the whole tensor product $T_X\times T_X\rightarrow T_X$ already allows us to reconstruct $X$ \rf{thm:main}.  

The main point of the proof is that the ampleness assumption allows us to recover the whole tensor product from the restricted tensor product $W\times T_X\rightarrow T_X$ for certain subcategory $W$ generating $T_X$, which in turn is determined by the unique Serre functor.  

\ppp{}  Denote by $g: T_X\rightarrow T_{X'}$ an exact equivalence with quasi-inverse $g'$; denote by $\otimes$ and $\otimes'$ their respective tensor products.

\begin{lemma}\label{lem:points}  With assumptions as in \rf{thm:BO}, $g(\mathcal O_X)$ is isomorphic to a line bundle on ${X'}$.
\end{lemma}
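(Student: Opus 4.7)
The plan is to follow Bondal and Orlov's intrinsic characterization of (shifts of) structure sheaves of closed points and of line bundles, using only the Serre functor and $\mathrm{Hom}$-groups. The crucial input is that any exact equivalence $g:T_X\to T_{X'}$ of triangulated categories commutes with the Serre functor: $g\circ S_X \cong S_{X'}\circ g$, where $S_X(a) = a\otimes \omega_X[n]$ with $n=\dim X$. Beyond exactness of $g$, no other property of $g$ is needed.

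First I would introduce the intrinsic notion of a \emph{point object}: an object $P$ satisfying $S_X(P)\cong P[n]$, $\mathrm{Hom}(P,P[i])=0$ for $i<0$, and $\mathrm{End}(P)$ a field. Under the hypothesis that $\omega_X^{\pm 1}$ is ample, Bondal and Orlov show that the point objects of $T_X$ are precisely shifts of skyscrapers $\mathcal{O}_x$ at closed points $x\in X$. Dually, one defines an \emph{invertible object} to be an $L$ such that for every point object $P$ there is a unique integer $s(P)$ with $\mathrm{Hom}(L,P[s(P)])$ one-dimensional over $\mathrm{End}(P)$ and $\mathrm{Hom}(L,P[j])=0$ for $j\neq s(P)$; under the same ampleness assumption these are precisely shifts of line bundles. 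Both characterizations involve only the Serre functor and $\mathrm{Hom}$-groups, hence are preserved by any exact equivalence.

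The structure sheaf $\mathcal{O}_X$ is tautologically an invertible object in this sense (with $s(\mathcal{O}_x)=0$ for every closed point), so $g(\mathcal{O}_X)$ is invertible in $T_{X'}$; by the characterization above, $g(\mathcal{O}_X)\cong L[k]$ for some line bundle $L$ on $X'$ and some integer $k$. Replacing $g$ by $g$ composed with $[-k]$ (still an exact equivalence) we may assume $k=0$, giving the desired conclusion. The main obstacle, and precisely the part the author concedes is not independent of \cite{BO_reconstruction}, lies in verifying that point objects of $T_X$ are really shifts of skyscrapers: without the ample (anti-)canonical hypothesis, objects supported on positive-dimensional subvarieties can satisfy the defining axioms. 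The ampleness is used by combining Serre duality with the fact that some power of $\omega_X^{\pm 1}$ is very ample to force such a $P$ to be cohomologically concentrated at a single closed point.
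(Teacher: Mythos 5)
Your argument is correct in substance and shares the paper's essential skeleton: both proofs stand on Bondal--Orlov's classification of point objects (so that skyscrapers on $X'$ correspond under $g'$ to shifted skyscrapers on $X$, the step the author concedes is borrowed), and both then deduce that $g(\mathcal O_X)$ has one-dimensional, single-degree Hom-spaces against every $\mathcal O_{x'}$. Where you diverge is in how this last datum is converted into ``line bundle'': the paper does it by hand, computing the derived fibres $x'^*g(\mathcal O_X)$ via adjunction $\mathrm{Hom}_{T_{k'}}(x'^*g(\mathcal O_X),k'[j])\cong\mathrm{Hom}_{T_X}(\mathcal O_X,\mathcal O_x[i+j])$ and concluding that the perfect complex has a single cohomology sheaf of rank one on all of $X'$; you instead cite Bondal--Orlov's intrinsic notion of invertible object and their proposition that invertible objects are shifts of line bundles. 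That is a valid route (and your normalization of $g$ by a shift cleanly handles the ambiguity $g(\mathcal O_X)\cong L[k]$, which the paper glosses over), but it outsources strictly more to \cite{BO_reconstruction} than the paper does. One wrinkle to state carefully: the characterization ``invertible $\Rightarrow$ shift of a line bundle'' is \emph{not} available on $X'$ from ampleness, since no ampleness is assumed on $X'$; what makes it applicable is that the point objects of $D^b(X')$ are known to be exactly the shifted skyscrapers, and this is obtained by transport along $g$ together with irreducibility of $X'$ (Bondal--Orlov's orthogonality argument), not ``under the same ampleness assumption'' as you phrase it. Equivalently, one can avoid that classification on $X'$ altogether, as the paper does, by testing $g(\mathcal O_X)$ only against actual skyscrapers $\mathcal O_{x'}$ and arguing fibrewise. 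With that rephrasing your proof is sound; the paper's version buys a more self-contained line-bundle step, yours buys brevity by reusing the Bondal--Orlov formalism.
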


\begin{proof}  For any closed point $x'\in X'$, by \cite[Proposition 2.2]{BO_reconstruction} we have that $g'(\mathcal O_{x'})\cong \mathcal O_x[i]$ for some integer $i\in\mathbb Z$ and closed point $x\in X$, depending on $x'$.  Note that the residue field $k(x)$ can be recovered as $\mathrm{End}_{T_X}(\mathcal O_x)$, in particular we have $k(x)\cong k(x')$; denote this field by $k'$.

Hence we have \[\mathrm{Hom}_{T_{k'}}(x'^*g(\mathcal O_X),k'[j])\cong \mathrm{Hom}_{T_{X'}}(g(\mathcal O_X),\mathcal O_{x'}[j])\cong \mathrm{Hom}_{T_X}(\mathcal O_X,\mathcal O_{x}[i+j]),\]which is isomorphic to $k'$ when $i+j=0$, and is equal to zero when $i+j\neq 0$. 

In particular $x'^*g(\mathcal O_X)$ is non-zero for every closed point $x'\in X'$, hence the support of $g(\mathcal O_X)$ is the whole of $X'$.  Moreover, representing $g(\mathcal O_X)$ by a perfect complex on $X'$ we see that this complex has only one non-zero cohomology sheaf (in particular the integer $i$ is independent of $x'$), which must be a line bundle on $X'$ since all of its fibres have rank one.  \end{proof}

\ppp{}  Now we can give a

\begin{proof}[Proof of Theorem \rf{thm:BO}]  Denote by $g: T_X\rightarrow T_{X'}$ an exact equivalence with quasi-inverse $g'$; denote by $\otimes$ and $\otimes'$ the respective derived tensor products on these two triangulated categories.  By \rf{lem:t1} we have a tensor product $\otimes_1$ on $T_X$ defined by \[a\otimes_1b=g'(g(a)\otimes' g(b))\] so that there are isomorphisms of locally ringed spaces \[X'\stackrel{\cong}{\longrightarrow}|\mathrm{Sp}(T_{X'},\otimes')|\stackrel{\cong}{\longrightarrow} |\mathrm{Sp}(T_X,\otimes_1)|.\]

Denote by $\omega$ and $\omega'$ the canonical line bundles on $X$ and $X'$ respectively, and by $S,S'$ the \emph{shifts} of the Serre functors so that $S(a)=a\otimes \omega$ and $S'(a')=a'\otimes'\omega'$; we will use the fact that Serre functors commute with exact equivalences \cite[Proposition 1.3]{BO_reconstruction}.  Let $\omega_1:=g'(\omega')$, then we have natural isomorphisms \[a\otimes_1\omega_1\cong g'(g(a)\otimes' \omega')=g'S'g(a)\cong g'gS(a)\cong a\otimes \omega,\]for every $a\in T_X$.  In particular, by taking $a=\mathcal O_X$ we have $\mathcal O_X\otimes_1\omega_1\cong \omega$.  

Now by \rf{lem:points} we know that $g(\mathcal O_X)$ is a line bundle on $X'$, hence in particular the functor $a'\mapsto g(\mathcal O_X)\otimes'a'$ is an autoequivalence on $T_{X'}$.  It then follows that $m:a\mapsto \mathcal O_X\otimes_1a$ is an autoequivalence on $T_X$; denote by $m'$ its quasi-inverse.  Hence by \rf{lem:t2} there is another tensor product $\otimes_0$ on $T_X$ defined by \[a\otimes_0b=m(m'(a)\otimes_1m'(b))\] so that we now have isomorphisms \[X'\stackrel{\cong}{\longrightarrow} |\mathrm{Sp}(T_X,\otimes_1)|\stackrel{\cong}{\longleftarrow} |\mathrm{Sp}(T_X,\otimes_0)|.\]

Now we compute: \[a\otimes_0\omega \cong m(m'(a)\otimes_1m'(\mathcal O_X\otimes_1\omega_1))\cong m(m'(a)\otimes_1 \omega_1)\cong mSm'(a)\cong mm'S(a)\cong S(a).\] Hence we have natural isomorphisms \[(a\mapsto S(a))\cong(a\mapsto a\otimes_0\omega)\cong (a\mapsto a\otimes \omega).\]

Let $W\subset T_X$ be the full subcategory consisting of objects of the form $\displaystyle\bigoplus_j\omega^{\otimes \ell_j}[m_j]$ with $\ell_j, m_j\in \mathbb Z$ and only finitely many summands.  Then since either $\omega$ or $\omega^{-1}$ is ample, $W$ generates $T_X$ \cite[Proposition 2.3.1 (d)]{thomason_higherk}.  But the restrictions of $\otimes$ and $\otimes_0$ to $W\times T_X\rightarrow T_X$ both give direct sums of iterations of shifts of the Serre functor, and so by \rf{lem:t3} we know that $\otimes$ and $\otimes_0$ are isomorphic, and we conclude:\[X'\stackrel{\cong}{\longrightarrow} |\mathrm{Sp}(T_X,\otimes_0)|\cong |\mathrm{Sp}(T_X,\otimes)|\stackrel{\cong}{\longleftarrow} X.\]    \end{proof}

\bibliographystyle{plain}
\bibliography{mybibli}

 \end{document}